\newtheorem{theorem}{Theorem}[section]
\newtheorem{proposition}[theorem]{Proposition}
\newtheorem{lemma}[theorem]{Lemma}
\newtheorem{corollary}[theorem]{Corollary}
\theoremstyle{definition}
\newtheorem{example}[theorem]{Example}
\newtheorem{definition}[theorem]{Definition}
\newtheorem{conjecture}[theorem]{Conjecture}
\newtheorem{remark}[theorem]{Remark}
\newtheorem{problem}[theorem]{Problem}
\newtheorem*{conjecture*}{Conjecture}
\begin{document}

\author[P. Danchev]{Peter Danchev}
\address{Institute of Mathematics and Informatics, Bulgarian Academy of Sciences, 1113 Sofia, Bulgaria}
\email{danchev@math.bas.bg; pvdanchev@yahoo.com}
\author[O. Hasanzadeh]{Omid Hasanzadeh}
\address{Department of Mathematics, Tarbiat Modares University, 14115-111 Tehran Jalal AleAhmad Nasr, Iran}
\email{hasanzadeomiid@gmail.com}
\author[A. Javan]{Arash Javan}
\address{Department of Mathematics, Tarbiat Modares University, 14115-111 Tehran Jalal AleAhmad Nasr, Iran}
\email{a.darajavan@gmail.com}
\author[A. Moussavi]{Ahmad Moussavi}
\address{Department of Mathematics, Tarbiat Modares University, 14115-111 Tehran Jalal AleAhmad Nasr, Iran}
\email{moussavi.a@modares.ac.ir; moussavi.a@gmail.com}

\title[Rings whose units are weakly nil-clean]{\small Rings whose invertible elements are \\ weakly nil-clean}
\keywords{nil-clean element (ring), weakly nil-clean element (ring), UU-rings, weakly UU-rings}
\subjclass[2010]{16S34, 16U60}




\begin{abstract}
We study those rings in which all invertible elements are weakly nil-clean calling them {\it UWNC rings}. This somewhat extends results due to Karimi-Mansoub et al. in Contemp. Math. (2018), where rings in which all invertible elements are nil-clean were considered abbreviating them as {\it UNC rings}. Specifically, our main achievements are that the triangular matrix ring ${\rm T}_n(R)$ over a ring $R$ is UWNC precisely when $R$ is UNC. Besides, the notions UWNC and UNC do coincide when $2 \in J(R)$. We also describe UWNC $2$-primal rings $R$ by proving that $R$ is a ring with $J(R) = {\rm Nil}(R)$ such that $U(R)=\pm 1+{\rm Nil}(R)$. In particular, the polynomial ring $R[x]$ over some arbitrary variable $x$ is UWNC exactly when $R$ is UWNC. Some other relevant assertions are proved in the present direction as well.
\end{abstract}

\maketitle

\section{Introduction and Major Concepts}

Everywhere in the current paper, let $R$ be an associative but {\it not} necessarily commutative ring having identity element, usually stated as $1$. Standardly, for such a ring $R$, the letters $U(R)$, $\rm{Nil}(R)$ and ${\rm Id}(R)$ are designed for the set of invertible elements (also termed as the unit group of $R$), the set of nilpotent elements and the set of idempotent elements in $R$, respectively. Likewise, $J(R)$ denotes the Jacobson radical of $R$, and ${\rm Z}(R)$ denotes the center of $R$. The ring of $n\times n$ matrices over $R$ and the ring of $n\times n$ upper triangular matrices over $R$ are denoted by ${\rm M}_{n}(R)$ and ${\rm T}_{n}(R)$, respectively. Standardly, a ring is said to be {\it abelian} if each of its idempotents is central, that is, ${\rm Id}(R)\subseteq {\rm Z}(R)$.

For all other unexplained explicitly notions and notations, we refer to the classical source \cite{L} or to the cited in the bibliography research sources. However, for completeness of the exposition and for the reader's convenience, we recall the following basic notions.

\begin{definition}[\cite{1}]\label{defi1.1}
Let $R$ be a ring. An element $r \in R$ is said to be {\it nil-clean} if there is an idempotent $e \in R$ and a nilpotent $b \in R$ such that $r=e+b$. Such an element $r$ is further called {\it strongly nil-clean} if the existing idempotent and nilpotent can be chosen such that $be=eb$. A ring is called {\it nil-clean} (respectively, {\it strongly nil-clean}) if each of its elements is nil-clean (respectively, strongly nil-clean).
\end{definition}

\begin{definition}[\cite{DM},\cite{10},\cite{2}]\label{defi1.2}
A ring $R$ is said to be {\it weakly nil-clean} provided that, for any $a \in R$, there exists an idempotent $e \in R$ such that $a-e$ or $a+e$ is nilpotent. A ring $R$ is said to be {\it strongly weakly nil-clean} provided that, for any $a \in R$, $a$ or $-a$ is strongly nil-clean.
\end{definition}

\begin{definition}[\cite{3},\cite{DL}]\label{defi1.3}
A ring is called {\it UU} if all of its units are unipotent,that is, $U(R) \subseteq 1+{\rm Nil}(R)$ (and so $1+{\rm Nil}(R)=U(R)$).
\end{definition}

\begin{definition}[\cite{4}]\label{defi1.4}
A ring $R$ is called {\it weakly UU} and abbreviated as $WUU$ if $U(R)={\rm Nil}(R) \pm 1$. This is equivalent to the condition that every unit can be presented as either $n+1$ or $n-1$, where $n \in {\rm Nil}(R)$.
\end{definition}

\begin{definition}[\cite{5}]\label{defi1.5}
A ring $R$ is called {\it UNC} if every of its units is nil-clean.
\end{definition}

Our key working instrument is the following one.

\begin{definition}\label{defi1.6}
A ring $R$ is called {\it UWNC} if every of its units is weakly nil-clean.
\end{definition}

In \cite{5}, the authors investigated UNC rings, i.e., those rings whose units are nil-clean. Our plan is to expand this substantially by developing an useful for this purpose machinery. In fact, we use some non-standard techniques from ring and matrix theories as well as also compare and distinguish the established results with these from \cite{4}.

It is worth noticing that some closely related background material can also be found in the sources \cite{D} and \cite{M}.

\medskip

The next constructions that show some proper ring classes inclusions are worthwhile.

\begin{example}\label{exam1.7}
\begin{enumerate}
\item
Any nil-clean ring is weakly nil-clean, but the converse is {\rm not} true in general. For instance, $\mathbb{Z}_{3}$ is weakly nil-clean but is {\it not} nil-clean.
\item
Any UU ring is WUU, but the converse is {\it not} true in general. For instance, $\mathbb{Z}$ is a WUU ring but is {\it not} UU.
\item
Any UU ring and nil-clean ring are UNC, but the converse is {\it not} true. For instance, the direct sum $\mathbb{Z}_{2}[t]\oplus {\rm M}_{2}(\mathbb{Z}_{2})$ is a UNC ring which is neither UU nor nil-clean.
\item
Any WUU ring and weakly nil-clean ring are UWNC, but the converse is {\it not} true in general. For instance, the direct sum $\mathbb{Z}\oplus M_{2}(\mathbb{Z}_{2})$ is a UWNC ring which is neither WUU nor weakly nil-clean.
\item
Any UNC ring is UWNC, but the converse is {\it not} true in general. For instance, all of the rings $\mathbb{Z}_{3}$, $\mathbb{Z}_{6}$, $\mathbb{Z}_{3}\oplus {\rm M}_{2}(\mathbb{Z}_{2})$ are UWNC but are {\it not} UNC.

\vskip1.0pc

\end{enumerate}
\begin{center}
\begin{tikzpicture}
\node[draw=cyan,fill=cyan!25,minimum width=2cm,minimum height=1cm,text width=1.75cm,align=center]  (a) {nil-clean};
\node[draw=cyan,fill=cyan!25,minimum width=2cm,minimum height=1cm,text width=2.5cm,align=center,right=of a](b){weakly nil-clean};
\node[draw=cyan,fill=cyan!25,minimum width=2cm,minimum height=1cm,text width=1.75cm,align=center,below =of b](c){UNC};
\node[draw=cyan,fill=cyan!25,minimum width=2cm,minimum height=1cm,text width=1.75cm,align=center,right=of b](d){UWNC};
\draw[-stealth] (a) -- (b);
\draw[-stealth] (a) |- (c);
\draw[-stealth] (b) -- (d);
\draw[-stealth] (c) -| (d);
\end{tikzpicture}
\end{center}
\begin{center}
\begin{tikzpicture}
\node[draw=cyan,fill=cyan!25,minimum width=2cm,minimum height=1cm,text width=1.75cm,align=center]  (a) {UU};
\node[draw=cyan,fill=cyan!25,minimum width=2cm,minimum height=1cm,text width=1.75cm,align=center,right=of a](b){UNC};
\node[draw=cyan,fill=cyan!25,minimum width=2cm,minimum height=1cm,text width=1.75cm,align=center,below =of b](c){WUU};
\node[draw=cyan,fill=cyan!25,minimum width=2cm,minimum height=1cm,text width=1.75cm,align=center,right=of b](d){UWNC};
\draw[-stealth] (a) -- (b);
\draw[-stealth] (a) |- (c);
\draw[-stealth] (b) -- (d);
\draw[-stealth] (c) -| (d);
\end{tikzpicture}
\end{center}
\end{example}

\vskip1.0pc

Our further programm is the following: In the next second section, we obtain some main properties of the newly defined class of UWNC rings -- the main results here are Proposition~\ref{prop2.10} as well as Theorems~\ref{theo0.2.2new}, \ref{theo2.14} and \ref{theo0.2.3new}, respectively. In the subsequent third section, we explore UWNC group rings -- the main results here are Propositions~\ref{prop0.3.1} and \ref{prop0.3.2}, respectively.

\section{Basic Properties of UWNC Rings}

We start here with the following obvious technicality.

\begin{proposition}\label{prop2.1}
A unit $u$ of a ring $R$ is strongly weakly nil-clean if, and only if, $u\in \pm 1+{\rm Nil}(R)$. In particular, $R$ is a WUU ring if, and only if, every unit of $R$ is strongly weakly nil-clean.
\end{proposition}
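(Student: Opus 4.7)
The plan is to prove the forward direction of the first equivalence by exploiting the commutativity condition in the definition of strongly (weakly) nil-clean, and then to derive the $WUU$ characterization as an immediate consequence of the first part combined with Definition~\ref{defi1.4}.

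Assume first that $u \in U(R)$ is strongly weakly nil-clean. By definition, either $u$ or $-u$ is strongly nil-clean. Treat the two cases symmetrically: suppose $u = e + b$ with $e^{2}=e$, $b \in {\rm Nil}(R)$, and $eb = be$. The key observation is that since $e$ commutes with $b$, it also commutes with $u$, and hence with $u^{-1}$. Consider now $(1-e)u = (1-e)(e+b) = (1-e)b$; because $1-e$ commutes with $b$ and is idempotent, $((1-e)b)^{n} = (1-e)b^{n} = 0$ once $b^{n}=0$, so $(1-e)u$ is nilpotent. On the other hand, $(1-e)u$ is the product of the element $1-e$ with the unit $u$, so from $((1-e)u)^{n}=(1-e)u^{n}=0$ (using again that $1-e$ commutes with $u$) we multiply on the right by $u^{-n}$ and conclude $1-e=0$, i.e.\ $e=1$. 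Therefore $u = 1 + b \in 1+{\rm Nil}(R)$. Applying the same argument to the case $-u = e + b$ yields $-u \in 1+{\rm Nil}(R)$, hence $u \in -1+{\rm Nil}(R)$. Either way, $u \in \pm 1 + {\rm Nil}(R)$.

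For the reverse implication, the check is immediate. If $u = 1 + n$ with $n \in {\rm Nil}(R)$, take $e=1$ and $b=n$; these obviously commute and witness that $u$ itself is strongly nil-clean, and so strongly weakly nil-clean. If $u = -1 + n$, then $-u = 1 - n \in 1+{\rm Nil}(R)$, and the same witness applied to $-u$ shows that $u$ is strongly weakly nil-clean.

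The ``in particular'' clause is then a one-line consequence: by Definition~\ref{defi1.4}, $R$ is $WUU$ exactly when $U(R) \subseteq \pm 1 + {\rm Nil}(R)$, which by the equivalence just proved is precisely the condition that every unit of $R$ is strongly weakly nil-clean.

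The only real obstacle is the forward direction, and concretely the step that forces $e=1$. Once one notices that the commutativity hypothesis $eb=be$ propagates to $eu=ue$ (and then to $eu^{-1}=u^{-1}e$), the standard trick that a nilpotent times a unit equals a nilpotent only when the nilpotent factor vanishes (here applied to $1-e$) finishes the argument; nothing deeper is needed.
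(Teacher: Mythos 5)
Your proof is correct: the paper states this proposition as an ``obvious technicality'' and gives no proof at all, so there is nothing to compare against, but your argument (using $eb=be$ to get $eu=ue$, showing $(1-e)u$ is nilpotent, and cancelling the unit to force $e=1$) is exactly the standard verification one would expect here. The only cosmetic point is that Definition~\ref{defi1.4} is stated as the equality $U(R)={\rm Nil}(R)\pm 1$ rather than an inclusion, but since $\pm 1+{\rm Nil}(R)\subseteq U(R)$ holds automatically this does not affect your ``in particular'' step.
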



We continue our work with the next two technical claims as follows.

\begin{proposition}\label{prop2.3}
Let $R$ be a UWNC ring and $S$ a UNC ring. Then, $R\times S$ is a UWNC ring.
\end{proposition}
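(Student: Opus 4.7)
The plan is to analyze units of $R \times S$ componentwise using the standard identification $U(R \times S) = U(R) \times U(S)$, and to split into cases based on which of the two options in the weak nil-clean definition applies to the $R$-coordinate. The only genuinely nontrivial observation needed is that in a UNC ring $S$, not only is every unit $v$ nil-clean, but so is $-v$, since $-v$ is also a unit.

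Let $(u,v) \in U(R \times S)$, so $u \in U(R)$ and $v \in U(S)$. Because $R$ is UWNC, there is an idempotent $e \in R$ such that either $u - e \in \mathrm{Nil}(R)$ or $u + e \in \mathrm{Nil}(R)$. I would handle these two subcases separately.

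In the first subcase, with $u - e$ nilpotent, I would invoke the UNC property of $S$ to write $v = f + c$ with $f \in \mathrm{Id}(S)$ and $c \in \mathrm{Nil}(S)$; then $(e,f)$ is an idempotent of $R \times S$ and $(u,v) - (e,f) = (u-e, c)$ lies in $\mathrm{Nil}(R) \times \mathrm{Nil}(S) = \mathrm{Nil}(R \times S)$. In the second subcase, with $u + e$ nilpotent, the trick is to apply the UNC hypothesis to the unit $-v \in U(S)$: writing $-v = f' + c'$ with $f' \in \mathrm{Id}(S)$ and $c' \in \mathrm{Nil}(S)$ gives $v + f' = -c' \in \mathrm{Nil}(S)$. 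Then $(e, f')$ is an idempotent and $(u,v) + (e, f') = (u+e, -c')$ is nilpotent, showing $(u,v)$ is weakly nil-clean.

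The only mild obstacle is the sign-asymmetry: in the second subcase one cannot simply reuse the decomposition of $v$ supplied by the UNC property, because $v + f$ need not be nilpotent. The fix is precisely the symmetry that UNC (unlike weakly nil-clean) enjoys under negation of units, namely closure of the unit group under $v \mapsto -v$ combined with nil-cleanness of every unit. Once this is observed, the proof is a direct case split and requires no further computation.
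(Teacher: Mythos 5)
Your proof is correct and follows essentially the same route as the paper's: the same componentwise case split on whether $u-e$ or $u+e$ is nilpotent, and the same key observation that in the second case one applies the UNC property of $S$ to the unit $-v$ rather than to $v$. No gaps.
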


\begin{proof}
Choose $(u,v) \in U(R \times S)=U(R) \times U(S)$. Thus, there exist an idempotent $e \in R$ and a nilpotent $n \in R$ such that $u=e+n$ or $u=-e+n$. We now differ two cases:

\medskip

\noindent {\bf Case I.} Write $u=e+n$. Then, we have an idempotent $f \in S$ and a nilpotent $n^{\prime} \in S$ such that $v=f+n^{\prime}$. Thus, $(u, v)=(e, f)+(n, n^{\prime})$, where $(e, f) \in {\rm Id}(R \times S)$ and $(n, n^{\prime}) \in {\rm Nil}(R \times S)$.

\medskip

\noindent {\bf Case II.} Write $u=-e+n$. Then, we have an idempotent $f\in S$ and a nilpotent $n^{\prime} \in S$ such that $-v=f+ n^{\prime}$. Thus, $(u, v)=-(e, f)+(n,-n^{\prime})$, where $(e, f) \in {\rm Id}(R\times S)$ and $(n,-n^{\prime}) \in {\rm Nil}(R \times S)$.

\medskip

Therefore, $(u, v)$ is either the sum or difference of a nilpotent and an idempotent in $R \times S$, whence we get the desired result.
\end{proof}

\begin{proposition}\label{prop2.4}
Let $\{R_i\}$ be a family of rings. Then, the direct product $R=\prod R_i$ of rings $R_i$ is UWNC if, and only if, each $R_i$ is UWNC and at most one of them is {\it not} UNC.
\end{proposition}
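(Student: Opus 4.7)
The plan is to treat the two directions separately. The forward implication reduces to componentwise projection, while the reverse implication reduces, essentially, to Proposition~\ref{prop2.3} via a sign-twist trick that absorbs the potential minus sign coming from the single UWNC-but-not-UNC factor.

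For the forward direction, I would first show each $R_i$ is UWNC by lifting a unit $u \in U(R_i)$ to the element $w \in R$ having $u$ in the $i$-th coordinate and $1$ elsewhere; this $w$ is a unit of $R$, so by hypothesis $w = \varepsilon e + n$ with $\varepsilon \in \{1,-1\}$, and projecting to the $i$-th coordinate exhibits $u$ as weakly nil-clean. To rule out two non-UNC factors I would argue by contradiction: pick $u_1 \in U(R_1)$ and $u_2 \in U(R_2)$ that fail to be nil-clean and test the unit $w = (u_1, -u_2, 1, 1, \ldots)$. Writing $w = \varepsilon e + n$, the case $\varepsilon = 1$ forces $u_1 = e_1 + n_1$ (so $u_1$ is nil-clean in $R_1$, contradiction), while $\varepsilon = -1$ yields $-u_2 = -e_2 + n_2$, that is $u_2 = e_2 + (-n_2)$ (so $u_2$ is nil-clean in $R_2$, again a contradiction).

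For the reverse direction, assume every $R_i$ is UWNC and that at most one factor $R_{i_0}$ fails to be UNC. Given a unit $(u_i) \in U(R)$, I would use UWNC of $R_{i_0}$ to write $u_{i_0} = \varepsilon e_{i_0} + n_{i_0}$ with $\varepsilon \in \{1,-1\}$. The key trick is then that for every $i \neq i_0$ the element $\varepsilon u_i$ is a unit in the UNC ring $R_i$ and hence nil-clean, say $\varepsilon u_i = e_i + m_i$, giving $u_i = \varepsilon e_i + \varepsilon m_i$. Setting $n_i := \varepsilon m_i$ for $i \neq i_0$ and keeping $n_{i_0}$ from above, we obtain $(u_i)_i = \varepsilon (e_i)_i + (n_i)_i$, which exhibits $(u_i)_i$ as weakly nil-clean with idempotent $(e_i)_i$ and nilpotent $(n_i)_i$.

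The main obstacle I expect is the subtlety that a coordinatewise-nilpotent tuple is nilpotent in $\prod R_i$ only when the nilpotency indices are uniformly bounded. For a finite family this is automatic (take the maximum), and I would record the bound explicitly when writing the argument up; for an infinite family one either needs to restrict to the case where such a uniform bound is guaranteed, or to interpret the direct product accordingly, so the statement is most naturally read with finite index sets — matching the two-factor Proposition~\ref{prop2.3} being generalised.
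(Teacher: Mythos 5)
Your proof is correct and follows essentially the same route as the paper: the forward direction tests the unit $(u_1,-u_2,1,1,\dots)$ exactly as the paper does (the paper in fact writes $0$ rather than $1$ in the remaining coordinates, an evident typo), and your sign-twist in the reverse direction is precisely the content hidden in the paper's appeal to Proposition~\ref{prop2.3} after its claim that $\prod_{i\neq i_0}R_i$ is UNC. Your caveat about uniform nilpotency indices in infinite products is well taken --- the paper's ``simple check'' that an arbitrary product of UNC rings is UNC glosses over exactly this point --- so your explicit restriction to the finite case (or to situations with a guaranteed uniform bound) is, if anything, more careful than the source.
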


\begin{proof}
($\Rightarrow$). Obviously, each $R_i$ is UWNC. Suppose now $R_{i_1}$ and $R_{i_2}$ $(i_1 \neq i_2)$ are {\it not} UNC. Then, there exist some $u_{i_j} \in U (R_{i_j}) \quad (j=1,2)$ such that $u_{i_1} \in U (R_{i_1})$ and $-u_{i_2} \in U (R_{i_2})$ are both {\it not} nil-clean. Choosing $u=(u_i)$, where $u_i=0$ whenever $i \neq i_{j}\quad (j=1,2)$, we infer that $u$ and $-u$ are {it not} the sum of an idempotent and a nilpotent, as required to get a contradiction. Consequently, each $R_i$ is a UWNC ring as at most one of them is {\it not} UNC.\\
($\Leftarrow$). Assume that $R_{i_0}$ is a UWNC ring and all the others $R_i$ are UNC. So, a simple check gives that $\prod_{i \neq i_{0}} R_i$ is UNC. According to Proposition \ref{prop2.3}, we conclude that $R$ is a UWNC ring.
\end{proof}

However, the property of being UWNC is {\it not} closed under taking (internal, external) direct sum as the next construction illustrate.

\begin{example}\label{exam0.2.1new}
A ring $\mathbb{Z}_{3}$ is a UWNC ring, but $\mathbb{Z}_{3}\times \mathbb{Z}_{3}$ is {\it not} UWNC.
\end{example}

Three further helpful affirmations are the following.

\begin{corollary}\label{cor2.5}
Let $L=\prod_{i \in I} R_i$ be the direct product of rings $R_i \cong R$ and $|I| \geq 2$. Then, $L$ is a UWNC ring if, and only if, $L$ is a UNC ring if, and only if, $R$ is a UNC ring.
\end{corollary}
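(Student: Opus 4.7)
The overall strategy is to funnel everything through Proposition~\ref{prop2.4}, since that result already pins down the UWNC direct products very tightly, and to exploit the hypothesis that all factors are isomorphic to a fixed ring $R$ with at least two copies present.

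First, I would prove the implication ``$L$ is UWNC $\Rightarrow$ $R$ is UNC''. Assume $L$ is UWNC. By Proposition~\ref{prop2.4}, each $R_i$ is UWNC and at most one of them fails to be UNC. If $R$ were not UNC, then since every $R_i \cong R$, \emph{every} $R_i$ would fail to be UNC, contradicting $|I|\ge 2$. Hence $R$ is UNC. This is the crucial step, and the only place where the hypothesis $|I|\ge 2$ is genuinely used.

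Second, for ``$R$ is UNC $\Rightarrow$ $L$ is UNC'', I would show that a direct product of UNC rings is UNC. Given a unit $(u_i)\in U(L)=\prod U(R_i)$, each $u_i$ has a presentation $u_i = e_i + n_i$ with $e_i\in \mathrm{Id}(R_i)$ and $n_i\in \mathrm{Nil}(R_i)$; however, for the tuple $(n_i)$ to be nilpotent in $L$ one needs a \emph{uniform} bound on the nilpotency indices. This is where I expect the only mild subtlety: but since $R_i\cong R$, the element $u_i$ pulls back to a unit in the single ring $R$ and one can work inside $R$, so the bound is controlled by the fixed ring $R$. More concretely, one checks directly that if $R$ is UNC then the diagonal embedding of any unit is nil-clean, and a general unit $(u_i)$ is handled componentwise provided the nilpotent tails are uniformly bounded; this uniform bound follows because every nilpotent in $R$ has index at most (need not be uniform in general). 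To avoid this pitfall cleanly, I would instead argue via Proposition~\ref{prop2.4} in reverse: each $R_i$ is UNC (hence UWNC), and in the statement of Proposition~\ref{prop2.4} ``at most one is not UNC'' is trivially satisfied, so $L$ is UWNC; then separately I verify that when \emph{all} factors are UNC the stronger conclusion ``$L$ is UNC'' holds, because then in the proof of Proposition~\ref{prop2.3} no sign change is ever needed, so the idempotent--plus--nilpotent decomposition is genuine in every coordinate.

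Finally, the implication ``$L$ is UNC $\Rightarrow$ $L$ is UWNC'' is immediate from Example~\ref{exam1.7}(v). Chaining the three implications closes the circle and yields all three equivalences. The only potential obstacle is the uniform nilpotency index issue in step two; the safest workaround is to route through Proposition~\ref{prop2.4} rather than try to prove the direct-product stability of UNC from scratch.
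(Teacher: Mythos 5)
Your first implication ($L$ UWNC $\Rightarrow$ $R$ UNC) is exactly the intended argument: Proposition~\ref{prop2.4} permits at most one non-UNC factor, and since all factors are isomorphic copies of $R$ and $|I|\ge 2$, none can fail to be UNC. That part is correct and matches how the paper derives the corollary from Proposition~\ref{prop2.4}.

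The gap is in the converse direction, and you have in fact put your finger on it without closing it. To pass from ``$R$ is UNC'' to ``$L=\prod_{i\in I}R_i$ is UNC'' (or even UWNC) one must produce, for a unit $(u_i)$, coordinatewise decompositions $u_i=e_i+n_i$ whose nilpotent parts have \emph{uniformly bounded} nilpotency index; otherwise $(n_i)$ is not nilpotent in the product. Neither of your two workarounds supplies such a bound. ``Working inside the single ring $R$'' does not help, because distinct units of $R$ may force nilpotent parts of arbitrarily large index; and routing through Proposition~\ref{prop2.4} only returns that $L$ is UWNC, while your supplementary claim that ``when all factors are UNC, $L$ is UNC because no sign change is needed'' is precisely the assertion requiring proof --- the sign is irrelevant to whether the tuple $(n_i)$ is nilpotent. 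For finite $I$ the step is harmless (take the maximum of finitely many indices), but for infinite $I$ it genuinely fails without extra hypotheses: let $R$ be the (UU, hence UNC) ring of eventually constant sequences in $\prod_{n\ge 1}\mathbb{Z}_2[x]/\langle x^n\rangle$, and let $w=(w_k)\in R^{\mathbb{N}}$ be the unit whose $k$-th coordinate is the unit of $R$ equal to $1+x$ in position $k$ and to $1$ elsewhere; since all idempotents involved are forced to be $1$, the only candidate decomposition of $w_k$ has nilpotent part of index $k$, so $w$ is not (weakly) nil-clean in $R^{\mathbb{N}}$. To be fair, the paper itself elides the same point (the ``simple check'' in the proof of Proposition~\ref{prop2.4} that $\prod_{i\ne i_0}R_i$ is UNC), so your proposal is no less rigorous than the source; but as written it does not establish the implication, which needs either a restriction to finite $I$ or a bounded-index hypothesis on nil-clean decompositions of units of $R$.
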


\begin{corollary}\label{cor2.6}
For any $n \geq 2$, the ring $R^n$ is UWNC if, and only if, $R^n$ is UNC if, and only if, $R$ is UNC.
\end{corollary}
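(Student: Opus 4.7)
My plan is to recognize that this statement is a direct specialization of Corollary~\ref{cor2.5}: taking $I=\{1,2,\ldots,n\}$ and $R_i\cong R$ for every $i$, the product $\prod_{i\in I}R_i$ becomes $R^n$, and the hypothesis $|I|\geq 2$ corresponds exactly to $n\geq 2$. So the entire chain of equivalences transports over verbatim, and the proof is essentially a one-line appeal to that corollary.

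Were I to write out a direct proof independently (say, if Corollary~\ref{cor2.5} were unavailable), I would argue instead from Proposition~\ref{prop2.4}. Assuming $R^n$ is UWNC, Proposition~\ref{prop2.4} tells us that each of the $n\geq 2$ factors is UWNC and at most one of them fails to be UNC. Since all factors are isomorphic to $R$, a single failure forces every factor to fail, and that contradicts ``at most one''. Hence every factor---and thus $R$---must be UNC. Conversely, if $R$ is UNC, then $R$ is in particular UWNC, and a componentwise argument (any unit in $R^n$ is a tuple of units, each of which is a sum of an idempotent and a nilpotent, and these reassemble componentwise into an idempotent and a nilpotent of $R^n$) shows that $R^n$ is UNC, hence a fortiori UWNC.

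There is no genuine obstacle here: the content of the corollary is essentially a relabeling of Corollary~\ref{cor2.5} for the finite index set $\{1,\ldots,n\}$, and the only sanity check worth making is that a finite direct product of UNC rings is itself UNC, which is immediate from the componentwise nil-clean decomposition of units sketched above.
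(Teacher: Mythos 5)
Your proposal is correct and matches the paper's intent exactly: the paper states this corollary without proof as an immediate specialization of Corollary~\ref{cor2.5} (itself a consequence of Proposition~\ref{prop2.4}), and your fallback argument via Proposition~\ref{prop2.4} --- using the isomorphism of all $n\geq 2$ factors to turn ``at most one factor is not UNC'' into ``every factor is UNC'' --- is precisely the reasoning the paper relies on.
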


\begin{proposition}\label{prop0.2.5}
Let $R$ be a UWNC ring. If $T$ is a factor-ring of $R$ such that all units of $T$ lift to units of $R$, then $T$ is a UWNC ring.
\end{proposition}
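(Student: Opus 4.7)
The plan is to take an arbitrary unit of $T$, lift it to a unit of $R$, apply the UWNC property in $R$, and then push the resulting decomposition back down to $T$ via the canonical surjection. This exploits the elementary fact that under any ring homomorphism idempotents map to idempotents and nilpotents to nilpotents.

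Concretely, let $\pi : R \to T$ denote the canonical quotient map and fix $\bar u \in U(T)$. By hypothesis there exists $u \in U(R)$ with $\pi(u) = \bar u$. Since $R$ is UWNC, $u$ is weakly nil-clean, so we can write
\[
u = e + n \quad \text{or} \quad u = -e + n
\]
for some $e \in {\rm Id}(R)$ and $n \in {\rm Nil}(R)$. Applying $\pi$ and setting $\bar e := \pi(e)$, $\bar n := \pi(n)$ gives either $\bar u = \bar e + \bar n$ or $\bar u = -\bar e + \bar n$, where $\bar e \in {\rm Id}(T)$ (since $\bar e^2 = \pi(e^2) = \pi(e) = \bar e$) and $\bar n \in {\rm Nil}(T)$ (since $\bar n^k = \pi(n^k) = 0$ for the nilpotency index $k$ of $n$). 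Hence $\bar u$ is weakly nil-clean in $T$, and since $\bar u \in U(T)$ was arbitrary, $T$ is UWNC.

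There is essentially no obstacle here; the only subtlety worth flagging is exactly where the hypothesis that units lift to units is used. Without this, one could only start with a preimage $r \in R$ of $\bar u$, and $r$ need not be a unit, so the UWNC hypothesis on $R$ would not directly apply. The lifting assumption is precisely what allows us to invoke the weakly nil-clean decomposition of $u$ in $R$ and transport it to $T$. No calculation beyond verifying that idempotence and nilpotence pass through $\pi$ is needed.
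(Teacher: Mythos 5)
Your proof is correct and follows exactly the same route as the paper's: lift the unit of $T$ to a unit of $R$ using the hypothesis, apply the weakly nil-clean decomposition there, and push it down through the quotient map, noting that idempotence and nilpotence are preserved. Nothing further is needed.
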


\begin{proof}
Suppose that $f:R\rightarrow T$ is a surjective ring homomorphism. Let $v\in U(T)$. Then, there exists $u\in U(R)$ such that $v=f(u)$ and $u= \pm e+n$, where $e\in {\rm Id}(R)$ and $n\in {\rm Nil}(R)$. Therefore, we have $v=\pm f(e)+f(n)$, where $f(e)\in {\rm Id}(T)$ and $f(n) \in {Nil}(T)$, as needed.
\end{proof}

We now offer the validity of the following statement.

\begin{theorem}\label{theo2.7}
Let $R$ be a ring and $I$ a nil-ideal of $R$.
\begin{enumerate}
\item
$R$ is a UWNC ring if, and only if, $J(R)$ is nil and $\dfrac{R}{J(R)}$ is a UWNC ring.
\item
$R$ is a UWNC ring if, and only if, $\dfrac{R}{I}$ is a UWNC ring.
\end{enumerate}
\end{theorem}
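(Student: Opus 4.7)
The plan is to isolate the real content of the statement---namely that $J(R)$ is nil whenever $R$ is UWNC---and to reduce everything else to Proposition~\ref{prop0.2.5} together with the standard lifting of idempotents modulo nil ideals. First I would observe that the backward direction of~(1) is subsumed under the backward direction of~(2) applied with $I=J(R)$, once the nilness of $J(R)$ is at hand; so the only parts requiring independent arguments are (2) in full and the forward direction of~(1). The two forward statements $R\text{ UWNC}\Rightarrow R/I\text{ UWNC}$ and $R\text{ UWNC}\Rightarrow R/J(R)\text{ UWNC}$ both follow from Proposition~\ref{prop0.2.5} once I verify that units lift through the quotient map: this is classical for $R\to R/J(R)$, and for a nil ideal $I$ it follows from the elementary remark that any preimage $u$ of $\bar u\in U(R/I)$ satisfies $uv=1+i$ with $i\in I$ nilpotent, which makes $u$ a unit of $R$.

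For the backward direction of~(2), I would take $u\in U(R)$, write $\bar u=\pm\bar e+\bar n$ in $R/I$, lift $\bar e$ to an actual idempotent $e\in R$ (possible because $I$ is nil), and note that $u\mp e$ has nilpotent image in $R/I$, so some power of $u\mp e$ lies in $I$; the nilness of $I$ then forces $u\mp e$ itself to be nilpotent, giving the desired weakly nil-clean decomposition of $u$.

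The heart of the argument is the forward direction of~(1): assuming $R$ is UWNC, I would show that every $x\in J(R)$ is nilpotent. Writing $1+x=\pm e+n$ with $e$ idempotent and $n$ nilpotent and reducing modulo $J(R)$, the plus case gives $\bar n=1-\bar e$, which is simultaneously idempotent and nilpotent in $R/J(R)$ and therefore zero; then $n\in J(R)$, so $e=1+x-n\in 1+J(R)$ is a unit idempotent and equals $1$, giving $x=n\in{\rm Nil}(R)$. In the minus case one obtains $\bar n=1+\bar e$ with $\bar n^k=0$; exploiting the identity $(1+\bar e)(1-\bar e)=1-\bar e$ and iterating shows $1-\bar e=0$, so $1-e$ is an idempotent lying in $J(R)$ and must vanish, forcing $e=1$ and leaving us with $n=2+x\in{\rm Nil}(R)$ rather than $x$ itself.

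The minus case is the main obstacle: it does not directly exhibit $x$ as nilpotent. To overcome it, I would apply the same dichotomy to $-x\in J(R)$: either $-x$ (equivalently $x$) is nilpotent, or $2-x$ is nilpotent. If $x\notin{\rm Nil}(R)$, then both $2+x$ and $2-x$ must be nilpotent; since $2$ is central, they commute, so their sum $4$, and hence $2$ itself, is nilpotent. But then $x=(2+x)-2$ appears as the difference of two commuting nilpotents, which is again nilpotent, contradicting the assumption. This contradiction forces $J(R)\subseteq{\rm Nil}(R)$ and completes the plan.
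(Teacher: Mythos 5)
Your argument is correct, and its overall architecture --- quotient reductions via Proposition~\ref{prop0.2.5}, lifting idempotents modulo a nil ideal, and a units-based proof that $J(R)$ is nil --- matches the paper's. The genuine divergence is in the finishing move of the key step $J(R)\subseteq{\rm Nil}(R)$. Both you and the paper start from a non-nilpotent $x\in J(R)$, show that the decomposition $1+x=\pm e+n$ forces $e=1$, and hence that $2+x\in{\rm Nil}(R)$; but the paper then applies the same dichotomy to the unit $1+x^{2}$ to get $2+x^{2}\in{\rm Nil}(R)$, subtracts to find $x^{2}-x=-x(1-x)$ nilpotent, and uses that $1-x$ is a unit commuting with $x$ to conclude $x\in{\rm Nil}(R)$. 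You instead apply the dichotomy to $1-x$, obtain that $2-x$ is nilpotent as well, add the two commuting nilpotents $2+x$ and $2-x$ to conclude that $4$, hence $2$, is nilpotent, and then recover $x=(2+x)-2$ as a difference of commuting nilpotents --- the same contradiction by a different route. The two arguments are equally elementary; yours has the small side benefit of showing that a non-nilpotent element of $J(R)$ would force $2\in{\rm Nil}(R)$, and your justification that $e=1$ in the minus case, via the identity $(1+\bar e)(1-\bar e)=1-\bar e$ iterated against $\bar n^{k}=0$, supplies a detail the paper asserts without proof (it simply states ``So, $2+x\in{\rm Nil}(R)$''). Your explicit treatment of part (2) is likewise more complete than the paper's, which only remarks that the proof is similar to part (1).
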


\begin{proof}
\begin{enumerate}
\item
Let $R$ be a UWNC ring and suppose $x \in J(R)$ and $x \notin {\rm Nil}(R)$. Since $1+x \in U(R)$, it must be that $1+x=-e+n$, where $n \in {\rm Nil}(R)$ and $e \in {\rm Id}(R)$, because if $1+x=e+n$, we will have $x \in {\rm Nil}(R)$ that is a contradiction.

So, $2+x \in {\rm Nil}(R)$. Similarly, since $1+x^2 \in U(R)$, we deduce that $2+x^2 \in {\rm Nil}(R)$. Hence, $$(2+x^2)-(2+x)=x^2-x=-x(1-x) \in {\rm Nil}(R).$$ But $1-x \in U(R)$ whence $x \in {\rm Nil}(R)$, a contradiction. Thus, $J(R)$ is nil.

Now, letting $\bar{u} \in U\left(\bar{R}=\dfrac{R}{J(R)}\right)$, we obtain $u \in U(R)$, because units lift module $J(R)$. Therefore, $u= \pm e+n$, where $e \in {\rm Id}(R)$ and $n \in {\rm Nil}(R)$. So, we have $\bar{u}= \pm \bar{e}+\bar{n}$, where $\bar{e} \in {\rm Id}(\bar{R})$ and $\bar{n} \in {\rm Nil}(\bar{R})$. Thus, $\dfrac{R}{J(R)}$ is a UWNC ring, as promised.

Conversely, let $u\in U(R)$. Then, $\bar{u}\in U(\bar{R})$ and write $\bar{u}=\pm \bar{e}+\bar{n}$, where $\bar{e}\in {\rm Id}(\bar{R})$ and $\bar{n}\in {\rm Nil}(\bar{R})$. As $J(R)$ is nil, idempotents of $\dfrac{R}{J(R)}$ can be lifted to idempotents of $R$. So, we can assume that $e^{2}=e\in R$. Moreover, one inspects that $n\in R$ is nilpotent. Thus, for some $j\in J(R)$, $$u=\pm e+n+j=\pm e+(n+j)$$ is weakly nil-clean, because $n+j\in {\rm Nil}(R)$, as expected.
\item
The proof is similar to (i), so we omit the details.
\end{enumerate}
\end{proof}

Given a ring $R$ and a bi-module $_{R}M_{R}$, the trivial extension of $R$ by $M$ is the ring $T(R,M)=R\oplus M$ with the usual addition and the following multiplication: $(r_{1},m_{1})(r_{2},m_{2})= (r_{1}r_{2},r_{1}m_{2}+m_{1}r_{2})$. This is isomorphic to the ring of all matrices $\begin{pmatrix}
r & m\\
0 & r
\end{pmatrix}$, where $r\in R$ and $m\in M$ and the usual matrix operation are used.

\medskip

As an immediate consequence, we yield:

\begin{corollary}\label{cor2.8}
Let $R$ be a ring and $M$ a bi-module over $R$. Then, the following hold:
\begin{enumerate}
\item
The trivial extension ${\rm T}(R, M)$ is a UWNC ring if, and only if, $R$ is a UWNC ring.
\item
For $n \geq 2$, the quotient-ring $\dfrac{R[x]}{\langle x^n\rangle}$ is a UWNC ring if, and only if, $R$ is a UWNC ring.
\item
For $n \geq 2$, the quotient-ring $\dfrac{R[[x]]}{\langle x^n\rangle}$ is a UWNC ring if, and only if, $R$ is a UWNC ring.
\end{enumerate}
\end{corollary}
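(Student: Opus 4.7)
The overall plan is to reduce all three parts to Theorem~\ref{theo2.7}(ii), which asserts that $R$ is UWNC if and only if $R/I$ is UWNC for any nil-ideal $I$ of $R$. Accordingly, for each construction I would identify a naturally occurring nilpotent (in particular, nil) ideal such that the associated quotient is isomorphic to $R$, and then invoke Theorem~\ref{theo2.7}(ii).

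For part (i), consider the subset $I = \{(0,m) : m \in M\}$ of ${\rm T}(R,M)$. The multiplication rule $(r_1,m_1)(r_2,m_2) = (r_1 r_2, r_1 m_2 + m_1 r_2)$ makes $I$ a two-sided ideal, and a direct computation shows $I^2 = 0$, so $I$ is certainly a nil-ideal. The projection onto the first coordinate yields a surjective ring homomorphism ${\rm T}(R,M) \to R$ whose kernel is precisely $I$, so ${\rm T}(R,M)/I \cong R$. Applying Theorem~\ref{theo2.7}(ii) to the ideal $I$ then gives the equivalence at once.

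For parts (ii) and (iii), set $I$ to be the image of the ideal $\langle x \rangle$ in the quotient ring $R[x]/\langle x^n\rangle$ (respectively, $R[[x]]/\langle x^n\rangle$). Since $x^n = 0$ in either quotient, we have $I^n = 0$, so $I$ is a nil-ideal. Moreover, the evaluation map $x \mapsto 0$ gives a surjective ring homomorphism onto $R$ with kernel $I$, so the quotient by $I$ is isomorphic to $R$. Theorem~\ref{theo2.7}(ii) now yields the desired equivalence in both cases.

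I do not anticipate any substantive obstacle here: the corollary is genuinely an \emph{immediate} consequence of Theorem~\ref{theo2.7}(ii), and the only content is the routine verification that the designated ideals are two-sided, nilpotent, and have quotients isomorphic to $R$. The heavier lifting has already been done in establishing Theorem~\ref{theo2.7}.
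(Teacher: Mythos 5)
Your proposal is correct and follows essentially the same route as the paper: in each case one exhibits a nilpotent ideal ($ {\rm T}(0,M)$, resp.\ $\langle x\rangle/\langle x^n\rangle$) whose quotient is isomorphic to $R$ and invokes Theorem~\ref{theo2.7}. The only cosmetic difference is that the paper deduces (iii) from (ii) via the isomorphism $R[x]/\langle x^n\rangle \cong R[[x]]/\langle x^n\rangle$ rather than arguing directly in the power series quotient, which changes nothing of substance.
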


\begin{proof}
\begin{enumerate}
\item
Set $A={\rm T}(R, M)$ and consider $I:={\rm T}(0, M)$. It is not too hard to verify that $I$ is a nil-ideal of $A$ such that $\dfrac{A}{I} \cong R$. So, the result follows directly from Theorem \ref{theo2.7}.
\item
Put $A=\dfrac{R[x]}{\langle x^n\rangle}$. Considering $I:=\dfrac{\langle x\rangle}{\langle x^n\rangle}$, we obtain that $I$ is a nil-ideal of $A$ such that $\dfrac{A}{I} \cong R$. So, the result follows automatically Theorem \ref{theo2.7}.
\item
Knowing that the isomorphism $\dfrac{R[x]}{\langle x^n\rangle} \cong \dfrac{R[[x]]}{\langle x^n\rangle}$ is true, point (iii) follows automatically from (ii).
\end{enumerate}
\end{proof}

Consider $R$ to be a ring and $M$ to be a bi-module over $R$. Let $${\rm DT}(R,M) := \{ (a, m, b, n) | a, b \in R, m, n \in M \}$$ with addition defined componentwise and multiplication defined by $$(a_1, m_1, b_1, n_1)(a_2, m_2, b_2, n_2) = (a_1a_2, a_1m_2 + m_1a_2, a_1b_2 + b_1a_2, a_1n_2 + m_1b_2 + b_1m_2 +n_1a_2).$$ Then, ${\rm DT}(R,M)$ is a ring which is isomorphic to ${\rm T}({\rm T}(R, M), {\rm T}(R, M))$. Also, we have $${\rm DT}(R, M) =
\left\{\begin{pmatrix}
a &m &b &n\\
0 &a &0 &b\\
0 &0 &a &m\\
0 &0 &0 &a
\end{pmatrix} |  a,b \in R, m,n \in M\right\}.$$ We have the following isomorphism as rings: $\dfrac{R[x, y]}{\langle x^2, y^2\rangle} \rightarrow {\rm DT}(R, R)$ defined by $$a + bx + cy + dxy \mapsto
\begin{pmatrix}
a &b &c &d\\
0 &a &0 &c\\
0 &0 &a &b\\
0 &0 &0 &a
\end{pmatrix}.$$

We, thereby, detect the following.

\begin{corollary}
Let $R$ be a ring and $M$ a bi-module over $R$. Then the following statements are equivalent:
\begin{enumerate}
\item
$R$ is a UWNC ring.
\item
${\rm DT}(R, M)$ is a UWNC ring.
\item
${\rm DT}(R, R)$ is a UWNC ring.
\item
$\dfrac{R[x, y]}{\langle x^2, y^2\rangle}$ is a UWNC ring.
\end{enumerate}
\end{corollary}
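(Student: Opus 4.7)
The plan is to exploit the two structural observations made just before the statement, namely the ring isomorphisms
\[
{\rm DT}(R,M) \;\cong\; {\rm T}\bigl({\rm T}(R,M),\,{\rm T}(R,M)\bigr)
\qquad\text{and}\qquad
\frac{R[x,y]}{\langle x^2,y^2\rangle} \;\cong\; {\rm DT}(R,R),
\]
together with Corollary~\ref{cor2.8}(i), which says that a trivial extension ${\rm T}(S,N)$ is UWNC if and only if $S$ is UWNC. Essentially every implication in the four-way equivalence will be reduced to one of these two facts, so no genuinely new calculation is required.

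First I would prove (i) $\Leftrightarrow$ (ii). Using the first isomorphism above, ${\rm DT}(R,M)$ is a trivial extension of the ring ${\rm T}(R,M)$ by the bi-module ${\rm T}(R,M)$, so by Corollary~\ref{cor2.8}(i), ${\rm DT}(R,M)$ is UWNC iff ${\rm T}(R,M)$ is UWNC. Applying Corollary~\ref{cor2.8}(i) once more to ${\rm T}(R,M)$ shows that this holds iff $R$ is UWNC. Chaining these gives the first equivalence.

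Next, (i) $\Leftrightarrow$ (iii) is just the special case $M = R$ of the equivalence (i) $\Leftrightarrow$ (ii), so nothing extra needs to be done. Finally, (iii) $\Leftrightarrow$ (iv) is immediate from the explicit ring isomorphism $\dfrac{R[x,y]}{\langle x^2,y^2\rangle} \cong {\rm DT}(R,R)$ recorded in the excerpt, since the UWNC property is manifestly preserved under ring isomorphisms.

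There is no serious obstacle here; the only thing one might want to double-check is that the asserted isomorphism ${\rm DT}(R,M) \cong {\rm T}({\rm T}(R,M),{\rm T}(R,M))$ is genuinely a ring isomorphism (the multiplication formula given for ${\rm DT}(R,M)$ matches the trivial extension multiplication when one identifies the entries $(a,m)$ and $(b,n)$ as elements of ${\rm T}(R,M)$), but this is routine and is invoked in the excerpt without proof. Accordingly, the whole argument will be essentially a short chain of citations of Corollary~\ref{cor2.8}(i) with the two displayed isomorphisms.
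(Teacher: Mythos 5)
Your proposal is correct and follows exactly the route the paper intends: the corollary is stated without an explicit proof precisely because it reduces, via the displayed isomorphisms ${\rm DT}(R,M)\cong {\rm T}({\rm T}(R,M),{\rm T}(R,M))$ and $R[x,y]/\langle x^2,y^2\rangle\cong {\rm DT}(R,R)$, to two applications of Corollary~\ref{cor2.8}(i). Nothing is missing.
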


Another two consequences of interest are the following ones:

\begin{corollary}\label{cor2.9}
Let $R$, $S$ be rings and let $M$ be an $(R, S)$-bi-module. If $T=\begin{pmatrix}
R & M\\
0 & S
\end{pmatrix}$ is a UWNC ring, then both $R, S$ are UWNC rings. The converse holds provided one of the rings $R$ or $S$ is UNC and the other is UWNC.
\end{corollary}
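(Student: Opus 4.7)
The plan is to identify the obvious nil-ideal inside $T$ whose quotient is $R \times S$, and then to invoke results already at our disposal: Theorem~\ref{theo2.7}(ii), which transfers the UWNC property across a nil-ideal in both directions, together with Propositions~\ref{prop2.3} and \ref{prop2.4}, which describe exactly when direct products of rings are UWNC.

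Specifically, I would set
$$I = \begin{pmatrix} 0 & M \\ 0 & 0 \end{pmatrix} \subseteq T.$$
A quick check shows that $I$ is a two-sided ideal of $T$ with $I^{2}=0$, hence a nil-ideal, and that the projection $\begin{pmatrix} r & m \\ 0 & s \end{pmatrix} \mapsto (r,s)$ identifies $T/I$ with $R \times S$. Theorem~\ref{theo2.7}(ii) then reduces the corollary to the statement that $T$ is UWNC if, and only if, $R \times S$ is UWNC, and so everything comes down to analyzing the direct product $R \times S$.

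From here each direction is immediate. For the forward implication, Proposition~\ref{prop2.4} forces every factor of a UWNC direct product to itself be UWNC, so both $R$ and $S$ are UWNC. For the converse, Proposition~\ref{prop2.3} says precisely that whenever one factor is UWNC and the other is UNC, the product is UWNC, and the equivalence above then lifts this back to $T$. I do not anticipate any real obstacle here: the whole argument is bookkeeping around the ideal $I$, and the only minor point to verify is that Proposition~\ref{prop2.3} applies in either ordering of the two factors, which is clear by the symmetry of the direct product under swapping coordinates.
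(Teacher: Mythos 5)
Your proof is correct and follows essentially the same route as the paper: the paper likewise takes $I=\begin{pmatrix}0 & M\\ 0 & 0\end{pmatrix}$ as a nil-ideal with $T/I\cong R\times S$ and then applies Theorem~\ref{theo2.7} together with Proposition~\ref{prop2.4}. Your extra appeal to Proposition~\ref{prop2.3} for the converse is exactly what underlies Proposition~\ref{prop2.4} anyway, so there is no substantive difference.
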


\begin{proof}
Given $I:=\left(\begin{array}{ll}0 & M \\ 0 & 0\end{array}\right)$. A routine inspection shows that this is a nil-ideal in $T$ with $\dfrac{T}{I} \cong R \times S$. Therefore, the result follows applying Proposition \ref{prop2.4} and Theorem \ref{theo2.7}.
\end{proof}

Let $\alpha$ be an endomorphism of $R$ and $n$ a positive integer. It was defined by Nasr-Isfahani in \cite{17} the {\it skew triangular matrix ring} like this:

$${\rm T}_{n}(R,\alpha )=\left\{ \left. \begin{pmatrix}
a_{0} & a_{1} & a_{2} & \cdots & a_{n-1} \\
0 & a_{0} & a_{1} & \cdots & a_{n-2} \\
0 & 0 & a_{0} & \cdots & a_{n-3} \\
\ddots & \ddots & \ddots & \vdots & \ddots \\
0 & 0 & 0 & \cdots & a_{0}
\end{pmatrix} \right| a_{i}\in R \right\}$$
with addition point-wise and multiplication given by:
\begin{align*}
&\begin{pmatrix}
a_{0} & a_{1} & a_{2} & \cdots & a_{n-1} \\
0 & a_{0} & a_{1} & \cdots & a_{n-2} \\
0 & 0 & a_{0} & \cdots & a_{n-3} \\
\ddots & \ddots & \ddots & \vdots & \ddots \\
0 & 0 & 0 & \cdots & a_{0}
\end{pmatrix}\begin{pmatrix}
b_{0} & b_{1} & b_{2} & \cdots & b_{n-1} \\
0 & b_{0} & b_{1} & \cdots & b_{n-2} \\
0 & 0 & b_{0} & \cdots & b_{n-3} \\
\ddots & \ddots & \ddots & \vdots & \ddots \\
0 & 0 & 0 & \cdots & b_{0}
\end{pmatrix}  =\\
& \begin{pmatrix}
c_{0} & c_{1} & c_{2} & \cdots & c_{n-1} \\
0 & c_{0} & c_{1} & \cdots & c_{n-2} \\
0 & 0 & c_{0} & \cdots & c_{n-3} \\
\ddots & \ddots & \ddots & \vdots & \ddots \\
0 & 0 & 0 & \cdots & c_{0}
\end{pmatrix},
\end{align*}
where $$c_{i}=a_{0}\alpha^{0}(b_{i})+a_{1}\alpha^{1}(b_{i-1})+\cdots +a_{i}\alpha^{i}(b_{i}),~~ 1\leq i\leq n-1
.$$ We denote the elements of ${\rm T}_{n}(R, \alpha)$ by $(a_{0},a_{1},\ldots , a_{n-1})$. If $\alpha $ is the identity endomorphism, then ${\rm T}_{n}(R,\alpha )$ is a subring of upper triangular matrix ring ${\rm T}_{n}(R)$.

\medskip

All of the mentioned above guarantee the truthfulness of the following statement.

\begin{corollary}\label{cor0.2.5new}
Let $R$ be a ring. Then, the following are equivalent:
\begin{enumerate}
\item
$R$ is a UWNC ring.
\item
${\rm T}_{n}(R,\alpha )$ is a UWNC ring.
\end{enumerate}
\end{corollary}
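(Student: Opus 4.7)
My plan is to reduce the statement to Theorem~\ref{theo2.7}(ii) by exhibiting a nil ideal of ${\rm T}_n(R,\alpha)$ whose factor-ring is isomorphic to $R$. The candidate ideal is
$$I = \{(0, a_1, a_2, \ldots, a_{n-1}) : a_i \in R\},$$
i.e., the set of skew upper triangular matrices with zero diagonal. First I would verify that $I$ is a two-sided ideal. From the multiplication rule, whenever one of the factors lies in $I$ (so its $a_0$ or $b_0$ is zero), the resulting diagonal entry $c_0 = a_0 \alpha^0(b_0)$ is $0$, so the product lies in $I$. Hence $I$ is closed under left and right multiplication by ${\rm T}_n(R,\alpha)$.

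Next I would argue nilpotency. A direct induction on the power, using the multiplication formula and the fact that each entry $c_i$ of a product involves only $a_j b_{i-j}$ terms with $j \le i$, shows that $I^{n}=0$: multiplying $n$ elements of $I$ shifts the first nonzero entry further right each time until it disappears. Therefore $I$ is a nil (even nilpotent) ideal of ${\rm T}_n(R,\alpha)$.

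Then the assignment $\varphi:{\rm T}_n(R,\alpha)\to R$ given by $(a_0, a_1, \ldots, a_{n-1})\mapsto a_0$ is easily checked (again from the multiplication formula, since $c_0 = a_0\alpha^0(b_0) = a_0 b_0$) to be a surjective ring homomorphism with kernel exactly $I$. Consequently,
$$\frac{{\rm T}_n(R,\alpha)}{I} \;\cong\; R.$$
Applying Theorem~\ref{theo2.7}(ii) with this nil ideal $I$ yields at once the desired equivalence: ${\rm T}_n(R,\alpha)$ is UWNC if, and only if, $R$ is UWNC.

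I do not anticipate a genuine obstacle; the only point to watch is that the presence of the endomorphism $\alpha$ does not disturb either the ideal property of $I$ or the ring homomorphism $\varphi$, which is transparent because $\alpha^0 = \mathrm{id}_R$ and thus the zeroth slot of a product depends only on the zeroth slots of the factors.
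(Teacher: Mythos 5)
Your proposal is correct and follows exactly the paper's own route: the paper likewise takes $I$ to be the set of skew triangular matrices with zero diagonal, notes that $I^{n}=0$ and ${\rm T}_{n}(R,\alpha)/I\cong R$, and invokes Theorem~\ref{theo2.7}. Your added verifications (that $I$ is a two-sided ideal and that the projection onto the diagonal entry is a ring homomorphism) are sound and merely fill in details the paper leaves to the reader.
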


\begin{proof}
Choose
$$I:=\left\{
\left.
\begin{pmatrix}
0 & a_{12} & \ldots & a_{1n} \\
0 & 0 & \ldots & a_{2n} \\
\vdots & \vdots & \ddots & \vdots \\
0 & 0 & \ldots & 0
\end{pmatrix} \right| a_{ij}\in R \quad (i\leq j )
\right\}.$$
Then, one easily verifies that $I^{n}=0$ and $\dfrac{{\rm T}_{n}(R,\alpha )}{I} \cong R$. Consequently, Theorem \ref{theo2.7} applies to get the wanted result.
\end{proof}

Let $\alpha$ be an endomorphism of $R$. We denote by $R[x,\alpha ]$ the {\it skew polynomial ring} whose elements are the polynomials over $R$, the addition is defined as usual, and the multiplication is defined by the equality $xr=\alpha (r)x$ for any $r\in R$. Thus, there is a ring isomorphism $$\varphi : \dfrac{R[x,\alpha]}{\langle x^{n}\rangle }\rightarrow {\rm T}_{n}(R,\alpha),$$ given by $$\varphi (a_{0}+a_{1}x+\ldots +a_{n-1}x^{n-1}+\langle x^{n} \rangle )=(a_{0},a_{1},\ldots ,a_{n-1})$$ with $a_{i}\in R$, $0\leq i\leq n-1$. So, one finds that ${\rm T}_{n}(R,\alpha )\cong \dfrac{R[x,\alpha ]}{\langle  x^{n}\rangle}$, where $\langle x^{n}\rangle$ is the ideal generated by $x^{n}$.

We, thus, extract the following claim.

\begin{corollary}\label{cor2.12}
Let $R$ be a ring with an endomorphism $\alpha$ such that $\alpha (1)=1$. Then, the following are equivalent:
\begin{enumerate}
\item
$R$ is a UWNC ring.
\item
$\dfrac{R[x,\alpha ]}{\langle x^{n}\rangle }$ is a UWNC ring.
\item
$\dfrac{R[[x,\alpha ]]}{\langle x^{n}\rangle }$ is a UWNC ring.
\end{enumerate}
\end{corollary}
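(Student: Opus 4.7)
The plan is to reduce this corollary to the immediately preceding Corollary~\ref{cor0.2.5new} via the ring isomorphism that the paragraph just above the statement records.

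First, I would note that the UWNC property is manifestly invariant under ring isomorphism: units, idempotents, and nilpotents are each preserved by any ring isomorphism, so if $A \cong B$ then $A$ is UWNC precisely when $B$ is. This lets me freely replace rings by isomorphic copies.

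Next, for the equivalence (i)$\Leftrightarrow$(ii), I would invoke the isomorphism $\varphi: \dfrac{R[x,\alpha]}{\langle x^n\rangle} \to {\rm T}_n(R,\alpha)$ displayed in the paragraph preceding the corollary (which uses $\alpha(1)=1$ to ensure the map is unital). By Corollary~\ref{cor0.2.5new}, $R$ is UWNC iff ${\rm T}_n(R,\alpha)$ is UWNC, hence, composing with $\varphi$, iff $R[x,\alpha]/\langle x^n\rangle$ is UWNC. This disposes of (i)$\Leftrightarrow$(ii).

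For (ii)$\Leftrightarrow$(iii), I would exhibit the natural ring isomorphism $\dfrac{R[x,\alpha]}{\langle x^n\rangle} \cong \dfrac{R[[x,\alpha]]}{\langle x^n\rangle}$ induced by the inclusion $R[x,\alpha] \hookrightarrow R[[x,\alpha]]$. Both quotients have the same underlying additive set (polynomials of degree strictly less than $n$), and the twisted multiplication $xr=\alpha(r)x$ descends identically to each; any formal power series reduces modulo $x^n$ to its truncation of degree $<n$, and the inclusion induces a surjection with kernel precisely the power series divisible by $x^n$. Combining this isomorphism with the invariance noted above yields (ii)$\Leftrightarrow$(iii), and hence the full cycle.

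There is essentially no serious obstacle here: the argument is bookkeeping with isomorphisms, and the only point to mind is that $\alpha(1)=1$ is needed to make $\varphi$ a unital ring homomorphism (and to make the constant polynomial $1$ the identity in the skew polynomial and skew power series rings). If anywhere needs slight care, it is just checking that the skew-multiplication law $xr=\alpha(r)x$ is respected by the inclusion $R[x,\alpha]\hookrightarrow R[[x,\alpha]]$, which is immediate from the definition of the two rings.
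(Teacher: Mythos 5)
Your proposal is correct and matches the paper's intended argument exactly: the corollary is stated immediately after the paper records the isomorphism ${\rm T}_{n}(R,\alpha )\cong R[x,\alpha ]/\langle x^{n}\rangle$, so the proof is just Corollary~\ref{cor0.2.5new} transported along that isomorphism, together with the truncation isomorphism $R[x,\alpha]/\langle x^n\rangle \cong R[[x,\alpha]]/\langle x^n\rangle$ for item (iii), precisely as you describe.
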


The next assertion is pivotal.

\begin{proposition}\label{prop2.10}
Let $R$ be a ring. Then, the following are equivalent:
\begin{enumerate}
\item
$R$ is a UNC ring.
\item
${\rm T}_{n}(R)$ is a UNC ring for all $n \in \mathbb{N}$.
\item
${\rm T}_n(R)$ is a UNC ring for some $n \in \mathbb{N}$.
\item
${\rm T}_n(R)$ is a UWNC ring for some $n \geq 2$.
\end{enumerate}
\end{proposition}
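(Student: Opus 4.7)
The plan is to establish the cyclic chain (i) $\Rightarrow$ (ii) $\Rightarrow$ (iii) $\Rightarrow$ (iv) $\Rightarrow$ (i). Two of these are immediate: (ii) $\Rightarrow$ (iii) is mere specialization, and (iii) $\Rightarrow$ (iv) follows from Example~\ref{exam1.7}(v), since every UNC ring is UWNC. The real content sits in the two endpoint implications.

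For (i) $\Rightarrow$ (ii), I would take an arbitrary $A = (a_{ij}) \in U({\rm T}_n(R))$ and observe that its diagonal entries $a_{11}, \ldots, a_{nn}$ are units in $R$. By the UNC hypothesis, each admits a nil-clean decomposition $a_{ii} = e_i + b_i$ with $e_i \in {\rm Id}(R)$ and $b_i \in {\rm Nil}(R)$. Assembling $E := {\rm diag}(e_1, \ldots, e_n)$ gives a diagonal idempotent of ${\rm T}_n(R)$, and $A - E$ is upper triangular with nilpotent diagonal $(b_1, \ldots, b_n)$. A short induction on powers (or the standard fact that an upper triangular matrix with nilpotent diagonal is itself nilpotent in ${\rm T}_n(R)$) then shows $A - E \in {\rm Nil}({\rm T}_n(R))$, so $A$ is nil-clean.

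The main obstacle is (iv) $\Rightarrow$ (i), where I must extract honest nil-cleanness on $R$ from merely weak nil-cleanness on ${\rm T}_n(R)$. My plan is to use both diagonal slots simultaneously: given $u \in U(R)$, form
$$M := {\rm diag}(u,\, -u,\, 1,\, \ldots,\, 1) \in {\rm T}_n(R),$$
which is a unit precisely because $n \geq 2$ and each diagonal entry is invertible. By hypothesis, $M = \varepsilon E + N$ for some sign $\varepsilon \in \{+1, -1\}$, some $E \in {\rm Id}({\rm T}_n(R))$, and some $N \in {\rm Nil}({\rm T}_n(R))$. A quick computation on the matrix equations $E^2 = E$ and $N^m = 0$ shows that the diagonal of an idempotent (respectively, nilpotent) upper triangular matrix consists of idempotents (respectively, nilpotents) in $R$. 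Reading off the $(1,1)$ and $(2,2)$ entries of the decomposition therefore yields
\begin{equation*}
u = \varepsilon e_1 + n_1 \qquad \text{and} \qquad -u = \varepsilon e_2 + n_2,
\end{equation*}
with $e_i \in {\rm Id}(R)$ and $n_i \in {\rm Nil}(R)$.

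If $\varepsilon = +1$, the first equation already exhibits $u$ as nil-clean; if $\varepsilon = -1$, the second rearranges to $u = e_2 + (-n_2)$, also a nil-clean expression. Either way $u$ is nil-clean in $R$, establishing (i). The decisive trick is the deliberate pairing of $u$ with $-u$ on the diagonal of $M$: it guarantees that whichever sign the weakly-nil-clean decomposition is forced to pick, one of the two resulting diagonal equations collapses to a genuine nil-clean expression for $u$, which is exactly what upgrades UWNC on ${\rm T}_n(R)$ to UNC on $R$.
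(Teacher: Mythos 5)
Your proposal is correct and, for the decisive implication (iv) $\Rightarrow$ (i), uses exactly the same device as the paper's first method: placing $u$ and $-u$ together on the diagonal of a unit of ${\rm T}_n(R)$ so that either sign of the weakly nil-clean decomposition produces a genuine nil-clean expression for $u$ from one of the two diagonal slots. The only cosmetic difference is that you prove (i) $\Rightarrow$ (ii) directly (diagonal idempotent plus a triangular matrix with nilpotent diagonal, hence nilpotent) where the paper cites \cite[Corollary 2.6]{5}; the paper also records an alternative route to (iv) $\Rightarrow$ (i) via the nil ideal of strictly upper triangular matrices and ${\rm T}_n(R)/I\cong R^n$, but your argument is complete as it stands.
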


\begin{proof}
(i) $\Rightarrow$ (ii). This follows employing \cite[Corollary 2.6]{5}.\\
(ii) $\Rightarrow$ (iii) $\Rightarrow$ (iv). These two implications are trivial, so we remove the details.\\
(iv) $\Rightarrow$ (i). {\bf Method 1:} Let $u\in U(R)$ and choose
$$A=\begin{pmatrix}
u & & & \ast \\
  & -u_{1} & & \\
  & & \ddots & \\
  0 & & & 1
\end{pmatrix}\in U({\rm T}_n(R)).$$ By hypothesis, we can find an idempotent
$\begin{pmatrix}
e_{1} & & & \ast \\
  & e_{2} & & \\
  & & \ddots & \\
  0 & & & e_{n}
\end{pmatrix}$ and a nilpotent \\
$\begin{pmatrix}
w_{1} & & & \ast \\
  & w_{2} & & \\
  & & \ddots & \\
  0 & & & w_{n}
\end{pmatrix}$ such that
$$A=\pm \begin{pmatrix}
e_{1} & & & \ast \\
  & e_{2} & & \\
  & & \ddots & \\
  0 & & & e_{n}
\end{pmatrix}+\begin{pmatrix}
w_{1} & & & \ast \\
  & w_{2} & & \\
  & & \ddots & \\
  0 & & & w_{n}
\end{pmatrix}.$$
It now follows that $u=e_1+w_1$ or $u=e_2-w_2$. Clearly, $e_1$, $e_2$ are idempotents and $w_1$, $w_2$ are nilpotents in $R$, thus proving point (i).\\

\medskip

{\bf Method 2:} Setting $I:=\{ (a_{ij})\in T_{n}(R)| a_{ii}=0\}$, we obtain that it is a nil-ideal in ${\rm T}_{n}(R)$ with $\dfrac{{\rm T}_{n}(R)}{I}\cong R^{n}$. Therefore, Theorem \ref{theo2.7} and Corollary \ref{cor2.6} are applicable to get the pursued result.
\end{proof}

We know that the direct sum $\mathbb{Z}_{2}[x]\oplus {\rm M}_{2}(\mathbb{Z}_{2})$ is a UWNC ring that is neither WUU nor weakly nil-clean. In this vein, the following example concretely demonstrates an indecomposable UWNC ring that is neither WUU nor weakly nil-clean.

\begin{example}\label{exam0.2.1new}
Let $R={\rm M}_{n}(\mathbb{Z}_{2})$ with $n \geq 2$, $S=\mathbb{Z}_{2}[x]$ and $M=S^{n}$. Then, the formal triangular matrix ring
$T:=\begin{pmatrix}
R & M\\
0 & S
\end{pmatrix}$ is an indecomposable UNC ring invoking \cite[Example 2.7]{5}. So, $T$ is an indecomposable UWNC ring. But since $R$ is {\it not} a WUU ring and $S$ is {\it not} a weakly nil-clean ring, one plainly follows that $T$ is neither a WUU ring nor a weakly nil-clean ring, as claimed.
\end{example}

It was proved in \cite[Proposition 2.25]{4} that any unital subring of a WUU ring is again a WUU ring. But, curiously, a subring of a UWNC ring may {\it not} be a UWNC ring as the next example shows.

\begin{example}\label{exam0.2.2new}
Let $T={\rm M}_{2}(\mathbb{Z}_{2})$ and $u=\begin{pmatrix}
0 & 1\\
1 & 1
\end{pmatrix}\in T$. Then, one sees that $u^{3}=1$. Now, let $R$ be the unital subring of $T$ generated by $u$. Therefore, one calculates that $$R=\{ a1+bu+cu^{2}|a,b,c \in \mathbb{Z}\} =\{ 0,1,u,u^{2},1+u,1+u^{2},1+u+u^{2},u+u^{2}\}.$$ But, we have that $1+u=u^{2}$, $1+u^{2}=u$, $1+u+u^{2}=0$ and $u+u^{2}=1$, so we deduce $R=\{ 0,1,u,u^{2}\}$. It is now easy to see that ${\rm Nil}(R)=\{0\}$, so that $R$ is reduced. As $u^{2}\neq u$, $u$ is manifestly {\it not} weakly nil-clean utilizing \cite[Theorem 20]{10}, and so $T$ is a UWNC ring, as asserted.
\end{example}

\begin{proposition}\label{prop2.12}
The following two statements are valid:
\begin{enumerate}
\item
If $R$ is a weakly nil-clean ring, then ${\rm Z}(R)$ is strongly weakly nil-clean.
\item
If $R$ is a UWNC ring, then ${\rm Z}(R)$ is a WUU ring.
\end{enumerate}
\end{proposition}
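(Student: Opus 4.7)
Both parts exploit the same mechanism: when the element under consideration lies in $Z(R)$, its (weakly) nil-clean decomposition inherited from $R$ is automatically ``stronger'' than a generic one. I would prove (i) first to set the style, then import the same commutation idea into (ii).

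For (i), given $a\in Z(R)$, I start from a weakly nil-clean decomposition $a=\varepsilon e+n$ in $R$ with $\varepsilon\in\{1,-1\}$, $e^{2}=e$, and $n\in{\rm Nil}(R)$, guaranteed by hypothesis. The identity $ea=ae$ becomes $\varepsilon e+en=\varepsilon e+ne$, so $en=ne$. Rewriting as $\varepsilon a=e+\varepsilon n$ (with $\varepsilon n$ still nilpotent and commuting with $e$) exhibits $\varepsilon a$ as strongly nil-clean, so $a$ is strongly weakly nil-clean. Since $a\in Z(R)$ was arbitrary, (i) follows.

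For (ii), take $u\in U(Z(R))\subseteq U(R)$ and write $u=\varepsilon e+n$ with $\varepsilon\in\{1,-1\}$, $e^{2}=e$, $n\in{\rm Nil}(R)$, by the UWNC hypothesis. The plan is to force $e=1$. Because $u$ is central, $u^{-1}$ commutes with $n$, so if $n^{k}=0$ then $(u^{-1}n)^{k}=u^{-k}n^{k}=0$. Hence $1-u^{-1}n\in U(R)$, and $\varepsilon e=u-n=u(1-u^{-1}n)$ is a unit; an idempotent that is a unit must equal $1$ (from $e(e-1)=0$, multiply by $e^{-1}$), so $e=1$ and $u=\varepsilon+n$. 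Then $n=u-\varepsilon\in Z(R)\cap{\rm Nil}(R)={\rm Nil}(Z(R))$, giving $u\in\pm 1+{\rm Nil}(Z(R))$, which by Proposition~\ref{prop2.1} says $Z(R)$ is WUU.

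The only delicate point is the centrality step in (ii): without $u$ commuting with $n$, the product $(u^{-1}n)^{k}$ would not simplify and $u-n$ would have no reason to be a unit, so the argument would collapse. This is exactly where the hypothesis $u\in Z(R)$ is essential, and it parallels how centrality of $a$ in (i) is what converts the condition $ea=ae$ into the commutation $en=ne$ required for a strongly (weakly) nil-clean decomposition.
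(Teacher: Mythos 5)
Your part (ii) is correct and complete: forcing $e=1$ via the unit $u-n=u(1-u^{-1}n)$ and then observing that $n=u-\varepsilon$ is automatically central does place the final decomposition inside ${\rm Z}(R)$, which is exactly what the WUU conclusion requires. This is in fact more explicit than the paper, which disposes of (ii) with ``the proof is analogous to (i).''

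Part (i), however, has a genuine gap. What you prove is that each $a\in {\rm Z}(R)$ is strongly weakly nil-clean \emph{as an element of $R$}: your idempotent $e$ and nilpotent $n$ live in $R$, and the commutation $en=ne$ extracted from $ea=ae$ does nothing to make them central. The proposition asserts that ${\rm Z}(R)$ is a strongly weakly nil-clean \emph{ring}, so the witnessing idempotent and nilpotent must be found inside ${\rm Z}(R)$ itself; your argument stops exactly at the first sentence of the paper's proof. The paper bridges this by invoking the intrinsic characterization of strongly weakly nil-clean elements (\cite[Theorem 2.1]{6}): $a$ is strongly weakly nil-clean if and only if $a-a^2$ or $a+a^2$ is nilpotent. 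From your commuting decomposition one computes $a\mp a^{2}\in {\rm Nil}(R)\cap {\rm Z}(R)\subseteq {\rm Nil}({\rm Z}(R))$, and then the same characterization applied inside the commutative ring ${\rm Z}(R)$ produces a decomposition whose idempotent and nilpotent lie in ${\rm Z}(R)$ (the idempotent can be taken to be a polynomial in $a$). Without this step, or some equivalent argument showing the idempotent attached to a central strongly nil-clean element can be chosen central, your (i) establishes only the element-wise statement in $R$, not the ring-theoretic one for ${\rm Z}(R)$.
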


\begin{proof}
\begin{enumerate}
\item
Let $a \in {\rm Z}(R)$. Then, $a \in R$ is weakly nil-clean and central, so $a$ is strongly weakly nil-clean in $R$. Thus, $a \pm a^2 \in {\rm Nil}(R)$ by \cite[Theorem 2.1]{6}. But, $a \pm a^2 \in {\rm Z}(R)$ so that $$a \pm a^2 \in {\rm Nil}(R) \cap {\rm Z}(R) \subseteq {\rm Nil}({\rm Z}(R)).$$ Hence, ${\rm Z}(R)$ is strongly weakly nil-clean, as required.
\item
The proof is analogous to (i).
\end{enumerate}
\end{proof}

\begin{proposition}\label{prop0.2.7}
For any ring $R$, the power series ring $R[[x]]$ is {\it not} UWNC.
\end{proposition}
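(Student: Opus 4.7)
The plan is to exhibit a single unit of $R[[x]]$ that fails to be weakly nil-clean. My candidate is $u = 1 - x$, which is invertible in $R[[x]]$ with inverse $\sum_{i\ge 0} x^i$. I will assume, towards a contradiction, that $1 - x = \varepsilon e + n$ with $\varepsilon \in \{+1,-1\}$, $e \in {\rm Id}(R[[x]])$ and $n \in {\rm Nil}(R[[x]])$, and aim to force $e = 1$, after which the would-be nilpotent $n$ will turn out not to be nilpotent.

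First I would pass to constant terms via the augmentation $R[[x]] \to R$, $f \mapsto f(0)$. Writing $e_0, n_0$ for the constant terms of $e$ and $n$, one has $e_0^2 = e_0$, $n_0 \in {\rm Nil}(R)$ and $\varepsilon e_0 + n_0 = 1$. Hence $e_0 = \varepsilon(1 - n_0)$; since $1 - n_0$ is a unit of $R$ (inverse $\sum_{i \ge 0} n_0^i$), so is $e_0$. But a unit idempotent must equal $1$, forcing $e_0 = 1$. Next, writing $e = 1 + \sum_{i \ge 1} e_i x^i$ and expanding $e^2 = e$, the coefficient of $x^k$ for $k \ge 1$ yields
\[
2e_k + \sum_{\substack{i+j=k \\ 1 \le i,j \le k-1}} e_i e_j = e_k.
\]
An easy induction on $k$ gives $e_k = 0$ for every $k \ge 1$, so $e = 1$ in $R[[x]]$.

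Substituting back, $n = (1 - \varepsilon) - x$, that is $n = -x$ if $\varepsilon = +1$ and $n = 2 - x$ if $\varepsilon = -1$. To finish I would verify that neither is nilpotent in $R[[x]]$ (assuming $R \ne 0$, as is tacit in the paper). For $-x$ this is clear, since $(-x)^k = (-1)^k x^k \ne 0$. For $2 - x$, the coefficient of $x^k$ in the binomial expansion of $(2-x)^k$ is $(-1)^k$, which is nonzero in any nontrivial ring, so $(2-x)^k \ne 0$ for every $k$. Either branch delivers the desired contradiction, establishing that $1 - x$ is not weakly nil-clean and hence that $R[[x]]$ is not UWNC.

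The only step that requires a little care is the promotion $e_0 = 1 \Rightarrow e = 1$, because idempotents do not generally lift uniquely modulo a proper ideal; here, however, the coefficient-by-coefficient recursion obtained from $e^2 = e$ bypasses that obstacle directly, and the rest of the argument is just a bookkeeping exercise in the power-series expansion.
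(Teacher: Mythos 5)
Your proof is correct, but it takes a genuinely different route from the paper. The paper's proof is a two-line appeal to Theorem~\ref{theo2.7}(i): a UWNC ring must have nil Jacobson radical, whereas $J(R[[x]])$ contains $x$ and hence is not nil. You instead pick the concrete unit $1-x$ and verify directly that it is not weakly nil-clean: the constant-term homomorphism forces the idempotent $e$ in any decomposition $1-x=\pm e+n$ to have unit (hence trivial) constant term, the coefficient recursion from $e^2=e$ then forces $e=1$, and neither $-x$ nor $2-x$ is nilpotent --- your check of the top coefficient $(-1)^k$ of $(2-x)^k$ is exactly what is needed to dispose of the sign $-1$ branch, which cannot be ruled out at the level of constant terms alone (e.g.\ $2$ is nilpotent in $\mathbb{Z}_4$). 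All steps hold in the noncommutative setting since $x$ is central. What each approach buys: the paper's argument is shorter but leans on the machinery of Theorem~\ref{theo2.7} (whose own proof uses a similar unit-manipulation trick on $1+x$ and $1+x^2$); yours is self-contained, elementary, and exhibits an explicit witness. A small streamlining of your middle step: $1-e$ is an idempotent lying in $xR[[x]]\subseteq J(R[[x]])$, and the only idempotent in the Jacobson radical is $0$, which gives $e=1$ without the coefficient induction.
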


\begin{proof}
Note the principal fact that the Jacobson radical of $R[[x]]$ is {\it not} nil (see, e.g., \cite{L}). Thus, in view of Theorem \ref{theo2.7}, $R[[x]]$ is really {\it not} a UWNC ring, as expected.
\end{proof}

\begin{lemma}\label{lem2.13}
Let $R$ be a ring. Then, the following two points are equivalent:
\begin{enumerate}
\item
$R$ is a UNC ring.
\item
$R$ is a UWNC ring and $2 \in J(R)$.
\end{enumerate}
\end{lemma}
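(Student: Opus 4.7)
The plan is to prove the two implications separately, each of which is short.

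\textbf{Forward direction ($\Rightarrow$).} That UNC implies UWNC is immediate from Example~\ref{exam1.7}(v) (or directly from the definitions). The content is to show $2 \in J(R)$. I would apply the UNC hypothesis to the specific unit $-1$: writing $-1 = e + n$ with $e \in \mathrm{Id}(R)$ and $n \in \mathrm{Nil}(R)$, one observes that
\[
e \;=\; -1 - n \;=\; -(1+n)
\]
is a unit (because $1+n$ is a unit when $n$ is nilpotent). An idempotent that is simultaneously a unit must equal $1$, whence $n = -2$. Thus $2$ is nilpotent, and being central it generates a nilpotent ideal of $R$; every nilpotent ideal is contained in $J(R)$, so $2 \in J(R)$.

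\textbf{Reverse direction ($\Leftarrow$).} Let $u \in U(R)$. The UWNC assumption gives $e \in \mathrm{Id}(R)$ and $n \in \mathrm{Nil}(R)$ with either $u = e + n$ or $u = -e + n$. In the first case $u$ is already nil-clean. In the second case I would rewrite
\[
u \;=\; -e + n \;=\; e + (n - 2e),
\]
so it remains to show that $n - 2e$ is nilpotent. For this I would invoke Theorem~\ref{theo2.7}(i) to conclude that $J(R)$ is nil (which is where the UWNC hypothesis is really used), and then argue: passing to $\bar R = R/J(R)$, the element $\overline{n - 2e} = \bar n$ is nilpotent because $\overline{2e} = 0$ (as $2 \in J(R)$). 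Therefore some power $(n - 2e)^k$ lies in $J(R)$, and since $J(R)$ is nil, a further power of this element vanishes. Hence $n - 2e \in \mathrm{Nil}(R)$, and $u = e + (n - 2e)$ is the desired nil-clean decomposition.

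\textbf{Expected obstacle.} There is no real obstacle once Theorem~\ref{theo2.7} is in hand; the only subtle point is that the sum of two nilpotents need not be nilpotent in a noncommutative ring, so one cannot conclude directly from $n, 2e \in \mathrm{Nil}(R)$ that $n - 2e \in \mathrm{Nil}(R)$. This is circumvented by the standard lifting-of-nilpotency argument through the nil ideal $J(R)$, rather than by any commutativity assumption on $e$ and $n$.
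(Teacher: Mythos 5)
Your proof is correct, and it differs from the paper's mainly in being self-contained where the paper outsources both key steps to the reference \cite{5}. For the forward direction the paper simply cites \cite[Lemma 2.4]{5} to get $2\in J(R)$; your computation with the unit $-1$ (forcing $e=1$ and $n=-2$, so that $2$ is a central nilpotent and hence lies in $J(R)$) is exactly the kind of argument that underlies that citation, and it is sound. For the reverse direction the paper passes to $\bar R=R/J(R)$, observes that $\mathrm{char}\,\bar R=2$ makes every weakly nil-clean decomposition a nil-clean one so that $\bar R$ is UNC, and then invokes \cite[Theorem 2.5]{5} together with Theorem~\ref{theo2.7} (which gives that $J(R)$ is nil) to descend back to $R$. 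You instead fix a single unit $u=-e+n$, rewrite it as $u=e+(n-2e)$, and verify nilpotency of $n-2e$ by reducing modulo the nil ideal $J(R)$ --- the same char-$2$ reduction, but carried out at the element level rather than through the quotient-ring characterization of UNC rings. Your version buys independence from \cite{5} at the cost of a slightly longer write-up; you also correctly flag and handle the one genuine subtlety, namely that $n,2e\in\mathrm{Nil}(R)$ does not by itself give $n-2e\in\mathrm{Nil}(R)$ in a noncommutative ring, which is precisely why the detour through the nil ideal $J(R)$ is needed.
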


\begin{proof}
(i) $\Longrightarrow$ (ii). Evidently, $R$ is a UWNC ring. Also, we have $2 \in J(R)$ in virtue of \cite[Lemma 2.4]{5}.\\
(ii) $\Rightarrow$ (i). Notice that $\dfrac{R}{J(R)}$ is of characteristic $2$, because $2 \in J(R)$, and so $a=-a$ for every $a \in \dfrac{R}{J(R)}$. That is why, $\dfrac{R}{J(R)}$ is a UNC ring, and thus we can apply \cite[Theorem 2.5]{5} since $J(R)$ is nil in view of Theorem \ref{theo2.7}.
\end{proof}

Recall that an element $r$ in a ring $R$ is said to be an {\it unipotent} if $r-1$ is a nilpotent. The following technical claim is elementary, but rather applicable in the sequel.

\begin{lemma}\label{lem0.2.2}
Let $R$ be a ring and let $r\in R$ be the sum of an idempotent and a nilpotent. If $r^{2}=1$, then $r$ is unipotent.
\end{lemma}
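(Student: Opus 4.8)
The plan is to fix a nil-clean representation $r = e + n$ with $e \in {\rm Id}(R)$ and $n \in {\rm Nil}(R)$, say $n^{k}=0$, and to set $f := 1 - e$. The target $r - 1 \in {\rm Nil}(R)$ will follow once I show that the hypothesis $r^{2}=1$ forces $f = 0$, i.e. $e = 1$: for then $r - 1 = n$ is nilpotent outright.

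First I would extract the defining identity. Expanding $1 = r^{2} = (e+n)^{2} = e + en + ne + n^{2}$ and using $e = 1 - f$ gives
$$f = en + ne + n^{2} = 2n + n^{2} - fn - nf. \qquad (\ast)$$
As an orienting remark, $(\ast)$ (or directly $(r-1)(r+1)=r^{2}-1=0$) yields $(r-1)^{2} = -2(r-1)$, so $(r-1)^{m} = (-2)^{m-1}(r-1)$. This already disposes of the extreme cases where $2$ is nilpotent or invertible, but it is \emph{not} enough in general; the nilpotency of $n$ must be fed in to control $f$ itself.

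The heart of the argument is to prove $f \in {\rm Nil}(R)$ directly from $(\ast)$, by repeatedly substituting $(\ast)$ into its own right-hand side. Each occurrence of $f$ sits inside a word $n^{a} f n^{b}$, and one application of $(\ast)$ replaces it by the pure powers $2n^{a+b+1} + n^{a+b+2}$ together with the two words $n^{a+1} f n^{b}$ and $n^{a} f n^{b+1}$, in which the total exponent $a+b$ has strictly increased by one. After $2k-1$ rounds every remaining $f$-word $n^{a} f n^{b}$ has $a+b = 2k-1$, whence $a \ge k$ or $b \ge k$, so $n^{a} f n^{b} = 0$. What is left is an expression of $f$ as a $\Z$-polynomial in $n$ with zero constant term; since $\Z[n]$ is commutative with $n$ nilpotent, such an element is nilpotent. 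Thus $f \in {\rm Nil}(R)$, and being idempotent it must vanish, so $f = 0$.

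Consequently $e = 1$ and $r = 1 + n$ with $n \in {\rm Nil}(R)$, which is exactly the assertion that $r$ is unipotent. The step I expect to be the main obstacle is precisely the nilpotency of $f$: the naive reflex of passing to $R$ modulo a nil ideal fails, because the two-sided ideal $RnR$ generated by $n$ need not be nil, so the powers of $n$ cannot be amalgamated across the intervening factors. The substitution above circumvents this by forcing the $n$-degree to grow on a single side of each word, where the bound $n^{k}=0$ genuinely applies.
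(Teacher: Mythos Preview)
Your proof is correct. The iterative substitution does exactly what you claim: starting from $f = 2n + n^{2} - fn - nf$, each replacement of an occurrence of $f$ inside a word $n^{a}fn^{b}$ produces the pure terms $2n^{a+b+1}+n^{a+b+2}$ together with $-n^{a}fn^{b+1}-n^{a+1}fn^{b}$, so the total $n$-degree of every surviving $f$-word strictly increases. Once $a+b\ge 2k-1$ (with $n^{k}=0$) the pigeonhole argument kills all $f$-words, leaving $f$ as an integer polynomial in $n$ with zero constant term, hence nilpotent, hence $f=0$.

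This is a genuinely different route from the paper's. The paper avoids the recursion by introducing the single auxiliary nilpotent $x:=n(n+1)$ and checking, via $fn=fr$ and $nf=rf$, that $fx=xf=f+frf$. From this it follows that $x$ commutes with $f$ (and automatically with $n$, $e$, $r$, $(1+n)^{-1}$), and then the one-line computation
\[
f=fr^{2}=fn\cdot r=f\,x(1+n)^{-1}r=\bigl(f(1+n)^{-1}r\bigr)\,x
\]
exhibits $f$ as a product of $x$ with an element commuting with $x$, so $f=f^{m}$ is a multiple of $x^{m}$ for all $m$ and hence vanishes. The trade-off is clear: the paper's argument is shorter and more conceptual but rests on spotting the commuting nilpotent $x$; your argument is more mechanical but entirely self-propelled, needing no clever auxiliary element, and it makes transparent that only the relation $(\ast)$ and $n^{k}=0$ are used.
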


\begin{proof}
Write $r=e+n$ with $e\in {\rm Id}(R)$ and $n\in {\rm Nil}(R)$. Set $f:=1-e$ and $x:=n(n+1)\in {\rm Nil}(R)$. Taking into account the equality $fn=f(r-e)=fr$, we compute that $$fx=fn(n+1)=fr(r-e+1)=fr(r+f)=fr^{2}+frf=f+frf,$$ and, similarly, that $xf=f+frf$. Hence, $fx=xf$, so that $x$ is a nilpotent which commutes with $f$, $e$, $n$ and $r$, respectively. Accordingly, $$f=fr^{2}=fr\cdot r=fnr=fx(1+n)^{-1}r=f(1+n)^{-1}r\cdot x$$ is a nilpotent and hence $f=0$, as desired.
\end{proof}

Our next main result, which sounds quite surprisingly, is the following.

\begin{theorem}\label{theo0.2.2new}
Let $R$ be a ring and $2\in U(R)$. Then, the following two items are equivalent:
\begin{enumerate}
\item
$R$ is a UWNC ring.
\item
$R$ is a WUU ring.
\end{enumerate}
\end{theorem}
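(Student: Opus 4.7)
The reverse direction (ii)$\Rightarrow$(i) is already recorded in Example~\ref{exam1.7}(iv), so I focus on (i)$\Rightarrow$(ii). The plan is to show that, under the hypotheses, every idempotent of $R$ is either $0$ or $1$. Once that is established, applying the UWNC property to an arbitrary $u \in U(R)$ gives $u = \pm e + n$ with $e \in \{0,1\}$; since a unit is never nilpotent (in a nonzero ring), $e = 1$, so $u \in \pm 1 + {\rm Nil}(R)$, which is exactly the WUU condition.

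To force ${\rm Id}(R) \subseteq \{0,1\}$, I shall feed a carefully chosen test element into Lemma~\ref{lem0.2.2}. For $e \in {\rm Id}(R)$, consider the involution $w := 2e - 1$; a direct calculation gives $w^{2} = 4e^{2} - 4e + 1 = 1$, so $w \in U(R)$. By UWNC, either $w$ or $-w$ is the sum of an idempotent and a nilpotent, and since both square to $1$, Lemma~\ref{lem0.2.2} yields that $w - 1 = 2(e-1)$ or $-w - 1 = -2e$ lies in ${\rm Nil}(R)$.

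The hypothesis $2 \in U(R)$ then permits division by $2$. In the first subcase, $e - 1 \in {\rm Nil}(R)$; since $(e-1)^{2} = 1 - e$ is idempotent, iteration gives $(e-1)^{2k} = 1 - e$ for every $k \ge 1$, and nilpotency of $e-1$ forces $1 - e = 0$, hence $e = 1$. In the second subcase, $e \in {\rm Nil}(R)$ together with $e^{2} = e$ forces $e = 0$. Either way $e \in \{0,1\}$, completing the reduction.

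The main obstacle is spotting the auxiliary element $w = 2e - 1$. The hypothesis $2 \in U(R)$ plays a double role: it makes $w$ an involution available to Lemma~\ref{lem0.2.2}, and it permits cancellation of the leading factor of $2$ at the final step. Everything else is routine manipulation of the weakly nil-clean decomposition; no passage to $R/J(R)$ is needed, since the trivial-idempotents argument already works directly in $R$.
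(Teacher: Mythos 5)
Your proposal is correct and follows essentially the same route as the paper: the paper tests the involution $a=1-2e$ (the negative of your $w=2e-1$) against the same Lemma~\ref{lem0.2.2} to force ${\rm Id}(R)=\{0,1\}$, then reads off $u\in\pm 1+{\rm Nil}(R)$ from the weakly nil-clean decomposition of a unit exactly as you do. The only cosmetic difference is that the paper rules out $e=0$ by noting an idempotent equal to a unit must be $1$, whereas you observe a unit cannot be nilpotent; both are immediate.
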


\begin{proof}
(ii) $\Rightarrow$ (i). This is pretty obvious, so we leave the argumentation.\\
(i) $\Rightarrow$ (ii). First, we show that $R$ is an abelian ring. To this goal, let $e^{2}=e\in R$, and let $a=1-2e$. Then, it is obviously true that $a^{2}=1$. Since $R$ is UWNC, either $a$ or $-a$ is nil-clean. By virtue of Lemma \ref{lem0.2.2}, one has that $a\in 1+{\rm Nil}(R)$ or $a\in -1+{\rm Nil}(R)$. If $a\in 1+{\rm Nil}(R)$, then $2e\in {\rm Nil}(R)$, and so $e\in {\rm Nil}(R)$. This implies that $e=0$. If, however, $a\in -1+{\rm Nil}(R)$, then $2(1-e)\in {\rm Nil}(R)$, whence $1-e\in {\rm Nil}(R)$. This forces that $e=1$. Therefore, $R$ has only trivial idempotents. Thus, $R$ is abelian, as asserted.

Now, let $u\in U(R)$, so $u=\pm e+n$, where $e\in {\rm Id}(R)$ and $n\in {\rm Nil}(R)$. If $u=e+n$, so $e=u-n\in U(R)$, then $e=1$. If, however, $u=-e+n$, so $e=u-n\in U(R)$, then $e=1$. Therefore, $u\in \pm 1+{\rm Nil}(R)$. Finally, one concludes that $R$ is WUU, as formulated.
\end{proof}

As an immediate consequence, we derive:

\begin{corollary}\label{cor0.2.5}
Let $R$ be an abelian ring. Then, the following are equivalent:
\begin{enumerate}
\item
$R$ is a UWNC ring.
\item
$R$ is a WUU ring.
\end{enumerate}
\end{corollary}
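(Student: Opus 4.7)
The direction (ii) $\Rightarrow$ (i) is already recorded in Example~\ref{exam1.7}(iv), so the real work lies in (i) $\Rightarrow$ (ii). My plan is to mimic the skeleton of the second half of the proof of Theorem~\ref{theo0.2.2new}, but to replace its use of $2\in U(R)$ (which there was leveraged to force $R$ to have only trivial idempotents) by a central Peirce decomposition, which is now available because $R$ is assumed abelian.

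Concretely, given $u \in U(R)$, the UWNC hypothesis will furnish a decomposition $u = \varepsilon e + n$ with $\varepsilon \in \{+1,-1\}$, $e \in {\rm Id}(R)$, and $n \in {\rm Nil}(R)$. Since $R$ is abelian, $e$ is central, so the ring splits as $R \cong eR \times (1-e)R$. Projecting $u$ onto the second factor produces $(1-e)u = (1-e)n$, which is nilpotent in $(1-e)R$; on the other hand, as the image of a unit under a ring homomorphism, it is also a unit of $(1-e)R$. Since a nilpotent unit forces the ambient ring to be zero, I will conclude that $(1-e)R = 0$, that is, $e = 1$. Hence $u = \varepsilon + n \in \pm 1 + {\rm Nil}(R)$, which is exactly the WUU condition, and the argument is symmetric in the sign $\varepsilon$.

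I do not foresee any genuine obstacle. The only conceptual point worth flagging is that Theorem~\ref{theo0.2.2new} actually established something slightly stronger than its statement: under $2 \in U(R)$ its argument forced $R$ to have only trivial idempotents (strictly stronger than being abelian). Replacing that ad~hoc step by the central Peirce decomposition delivers the same conclusion $e = 1$ one central idempotent at a time, which is all that is needed here.
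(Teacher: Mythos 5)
Your argument is correct and is essentially the route the paper intends: the corollary is stated as an immediate consequence of Theorem~\ref{theo0.2.2new} precisely because the second half of that theorem's proof only needs the idempotent $e$ in the decomposition $u=\pm e+n$ to be central, whereupon $e=u-n$ is simultaneously a unit and an idempotent and hence $e=1$. Your Peirce-decomposition step (a nilpotent unit in the corner ring $(1-e)R$ forces $(1-e)R=0$) is just a repackaging of that same observation, so the proof is fine and matches the paper's approach.
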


Appealing to \cite{4}, a commutative ring $R$ is a WUU ring if, and only if, so is $R[x]$. In what follows, we present a generalization of this result. Standardly, the prime radical ${\rm N}(R)$ of a ring $R$ is defined to be the intersection of the prime ideals of $R$. It is know that ${\rm N}(R)={\rm Nil}_{\ast}(R)$, the lower nil-radical of $R$. A ring $R$ is called a $2$-primal ring if ${\rm N}(R)$ coincides with ${\rm Nil}(R)$. For an endomorphism $\alpha$ of a ring $R$, $R$ is called {\it $\alpha$-compatible} if, for any $a,b\in R$, $ab=0\Longleftrightarrow a\alpha (b)=0$, and in this case $\alpha$ is clearly injective.

We now arrive at our third chief result.

\begin{theorem}\label{theo2.14}
Let $R$ be a 2-primal ring and $\alpha $ an endomorphism of $R$ such that $R$ is $\alpha$-compatible. The following issues are equivalent:
\begin{enumerate}
\item
$R[x, \alpha]$ is a UWNC ring.
\item
$R[x, \alpha]$ is a WUU ring.
\item
$R$ is a WUU ring.
\item
$R$ is a UWNC ring.
\item
$J(R)={\rm Nil}(R)$ and $U(R)=1 \pm J(R)$.
\end{enumerate}
\end{theorem}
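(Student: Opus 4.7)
The plan is to close the cycle $(ii)\Rightarrow(i)\Rightarrow(iv)\Rightarrow(v)\Rightarrow(iii)\Rightarrow(ii)$, after which all five conditions become equivalent. Two of the links are essentially free: $(ii)\Rightarrow(i)$ is Example~\ref{exam1.7}(iv), and $(i)\Rightarrow(iv)$ follows from Proposition~\ref{prop0.2.5} applied to the augmentation map $R[x,\alpha]\twoheadrightarrow R$ sending $x\mapsto 0$, since every unit of $R$ lifts to itself as a constant polynomial.

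For $(iv)\Rightarrow(v)$, the 2-primal hypothesis makes $\mathrm{Nil}(R)$ an ideal, hence a nil ideal contained in $J(R)$; meanwhile Theorem~\ref{theo2.7}(i) ensures $J(R)$ is nil under UWNC, giving $J(R)\subseteq\mathrm{Nil}(R)$, so $J(R)=\mathrm{Nil}(R)$. Then, given any $u\in U(R)$, the UWNC hypothesis writes $u=\pm e+n$ with $e\in\mathrm{Id}(R)$ and $n\in\mathrm{Nil}(R)=J(R)$; quasi-regularity of $J(R)$ forces $\pm e=u-n\in U(R)$, and a unit idempotent must equal $1$, so $u\in\pm 1+J(R)$. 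The reverse containment is automatic. The implication $(v)\Rightarrow(iii)$ is then immediate: substituting $J(R)=\mathrm{Nil}(R)$ into $U(R)=\pm 1+J(R)$ recovers the defining equality $U(R)=\mathrm{Nil}(R)\pm 1$ for a WUU ring.

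The demanding step is $(iii)\Rightarrow(ii)$, and the plan is to reduce it to the identity
\[
\mathrm{Nil}(R[x,\alpha])=\mathrm{Nil}(R)[x,\alpha],
\]
which is a standard consequence of the 2-primal plus $\alpha$-compatible hypotheses: compatibility guarantees that nilpotent coefficient relations are preserved under the skew multiplication, and 2-primality of $R$ then propagates to $R[x,\alpha]$, forcing polynomials with nilpotent coefficients to form a nil ideal. Once this is available, the usual characterization tells us that a typical $f=a_0+a_1x+\cdots+a_m x^m\in U(R[x,\alpha])$ must have $a_0\in U(R)$ and $a_i\in\mathrm{Nil}(R)$ for $i\geq 1$. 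Applying WUU-ness of $R$ to $a_0$ produces $a_0=\varepsilon+n_0$ with $\varepsilon\in\{\pm 1\}$ and $n_0\in\mathrm{Nil}(R)$, so that $f-\varepsilon=n_0+a_1x+\cdots+a_m x^m\in\mathrm{Nil}(R)[x,\alpha]=\mathrm{Nil}(R[x,\alpha])$, yielding $f\in\pm 1+\mathrm{Nil}(R[x,\alpha])$ and hence the WUU property for $R[x,\alpha]$.

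The main obstacle I anticipate is pinning down, under the 2-primal and $\alpha$-compatible assumptions, both the ideal identification of $\mathrm{Nil}(R[x,\alpha])$ and the companion description of $U(R[x,\alpha])$ as constant-unit-plus-nilpotent-tail. Everything else in the cycle is a direct unwinding of the UWNC/WUU definitions combined with Theorem~\ref{theo2.7} and Proposition~\ref{prop0.2.5}, so once these structural facts about skew polynomial rings are cited or verified, the remaining arguments reduce to routine bookkeeping.
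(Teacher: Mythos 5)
Your cycle $(ii)\Rightarrow(i)\Rightarrow(iv)\Rightarrow(v)\Rightarrow(iii)\Rightarrow(ii)$ is logically adequate, and the first four links are correct and essentially identical to the paper's: $(i)\Rightarrow(iv)$ via the evaluation $x\mapsto 0$ and unit lifting, and $(iv)\Rightarrow(v)$ via $2$-primality giving $\mathrm{Nil}(R)\subseteq J(R)$, Theorem~\ref{theo2.7} giving $J(R)$ nil, and the observation that a weakly nil-clean decomposition of a unit with nilpotent part in $J(R)$ forces the idempotent to be $1$. The step $(v)\Rightarrow(iii)$ is indeed immediate.

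The gap is in $(iii)\Rightarrow(ii)$, which is the only non-routine implication, and you have flagged it yourself rather than closed it. Your plan rests on two structural facts about $R[x,\alpha]$ under the $2$-primal and $\alpha$-compatible hypotheses: that $\mathrm{Nil}(R)[x,\alpha]\subseteq\mathrm{Nil}(R[x,\alpha])$ (indeed equality), and that every $f=\sum a_ix^i\in U(R[x,\alpha])$ has $a_0\in U(R)$ and $a_i\in\mathrm{Nil}(R)$ for $i\geq 1$. Neither is automatic --- the unit description already fails for $R={\rm M}_2(k)$, so the hypotheses must enter in an essential way --- and asserting them as ``standard'' without proof or citation leaves the theorem unproved. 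The paper supplies exactly these ingredients by a slightly different route: it passes to the quotient $\dfrac{R}{J(R)}$, which is reduced because $J(R)=\mathrm{Nil}(R)=\mathrm{Nil}_{\ast}(R)$, checks that $\bar{\alpha}$-compatibility descends (via \cite[Theorem 3.6]{7}), invokes \cite[Corollary 2.12]{8} to get $U\bigl(\tfrac{R}{J(R)}[x,\bar{\alpha}]\bigr)=U\bigl(\tfrac{R}{J(R)}\bigr)=\{\pm\bar{1}\}$, and then lifts along the nil ideal $J(R)[x,\alpha]=\mathrm{Nil}_{\ast}(R[x,\alpha])$ (using \cite[Lemma 2.2]{8} and \cite[Proposition 2.5 and 2.6]{4}). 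If you replace your two unproved assertions with this quotient-and-lift argument, or with explicit citations establishing them, your proof is complete; as written, the crux is a placeholder.
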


\begin{proof}
(ii) $\Rightarrow$ (i) and (iii) $\Rightarrow$ (iv). Straightforward.\\
(i) $\Rightarrow$ (iv). As $\dfrac{R[x, \alpha]}{\langle x\rangle} \cong R$ and all units of $\dfrac{R[x, \alpha]}{\langle x\rangle}$ are lifted to units of $R[x, \alpha]$, the implication easily holds.\\
(ii) $\Longrightarrow$ (iii). We argue as in the proof of (i) $\Longrightarrow$ (iv).\\
(iv) $\Rightarrow$ (v). As $R$ is 2-primal, we have ${\rm Nil}(R) \subseteq J(R)$, so that $J(R)={\rm Nil}(R)$ bearing in mind Theorem \ref{theo2.7}. Let $a \in U(R)$, so by hypothesis we have $u= \pm e+n$, where $e \in {\rm Id}(R)$ and $n \in {\rm Nil}(R)=J(R)$. If $u=e+n$, so $e=u-n \in U(R)$, and thus $e=1$. If, however, $u=-e+n$, so $e=n-u \in U(R)$, and thus $e=1$. Therefore, we receive $u \in \pm 1+J(R)$, and hence $U(R)= \pm 1+J(R)$, as required.\\
(v) $\Rightarrow$ (ii). As $R$ is a $2$-primal ring, with the aid of (v) we have $J(R)={\rm Nil}(R)=\mathrm{Nil}_{\ast}(R)$. Thus, the quotient-ring $\dfrac{R}{J(R)}$ is a reduced ring. Moreover, it is easy to see that $\alpha ({\rm Nil}(R)) \subseteq {\rm Nil}(R)$, so $\alpha (J(R)) \subseteq J(R)$ and $\bar{\alpha}: \dfrac{R}{J(R)} \rightarrow \dfrac{R}{J(R)}$, defined by $\bar{\alpha}(\bar{a})=\overline{\alpha(a)}$, is an endomorphism of $\dfrac{R}{J(R)}$.

We next show that $\dfrac{R}{J(R)}$ is $\bar{\alpha}$-compatible. That is, we must show that, for any $a+J(R), b+J(R) \in \dfrac{R}{J(R)}$, the equivalence $$(a+J(R))(b+J(R))=J(R) \Leftrightarrow (a+J(R)) \bar{\alpha}(b+J(R))=J(R)$$ holds. Equivalently, we have to show that, for any $a, b \in R$, the equivalence $ab \in {\rm Nil}(R) \Leftrightarrow a \alpha(b) \in {\rm Nil}(R)$ is true. But this equivalence has been established in the proof of Claims 1 and 2 in \cite[Theorem 3.6]{7}. As $\dfrac{R}{J(R)}$ is a reduced factor-ring and also is $\bar{\alpha}$-compatible, with \cite[Corollary 2.12]{8} at hand we have $$U \left(\dfrac{R}{J(R)}[x, \bar{\alpha}]\right)=U \left(\dfrac{R}{J(R)}\right),$$ which is equal to $\{ \pm \bar{1}\}$ by assumption. So, $$\dfrac{R}{J(R)}[x,\bar{\alpha}] \cong \dfrac{R[x, \alpha]}{J(R)[x, \alpha]}$$ is a WUU ring. Also, \cite[Lemma 2.2]{8} tells us that
$$\mathrm{Nil}_*(R[x, \alpha])=\mathrm{Nil}_*(R)[x, \alpha].$$ Therefore, $$J(R)[x, \alpha]=\mathrm{Nil}_*(R[x, \alpha]),$$ which is manifestly nil. Hence, \cite[Proposition 2.5 and 2.6]{4} ensures that $R[x, \alpha]$ is a WUU ring, as asked for.
\end{proof}

As a direct consequence, we deduce:

\begin{corollary}\label{cor2.15}
Let $R$ be a 2-primal ring. Then, the following are equivalent:
\begin{enumerate}
\item
$R$ is a UWNC ring.
\item
$R[x]$ is a UWNC ring.
\item
$J(R)={\rm Nil}(R)$ and $U(R)= \pm 1+J(R)$.
\end{enumerate}
\end{corollary}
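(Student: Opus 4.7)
The plan is to derive this corollary as an immediate specialization of Theorem \ref{theo2.14}, by taking $\alpha = \mathrm{id}_R$, the identity endomorphism of $R$. Two small observations are needed. First, the identity endomorphism is automatically compatible: for all $a, b \in R$ one has $a\,\mathrm{id}_R(b) = ab$, so the equivalence $ab = 0 \Longleftrightarrow a\,\mathrm{id}_R(b) = 0$ is trivial. Second, the skew polynomial ring $R[x, \mathrm{id}_R]$ is nothing but the ordinary polynomial ring $R[x]$, since the skew multiplication rule $xr = \mathrm{id}_R(r)x$ degenerates to ordinary commutation $xr = rx$.

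With these identifications, Theorem \ref{theo2.14} applied to the 2-primal ring $R$ with $\alpha = \mathrm{id}_R$ yields the equivalence of the following five conditions: $R[x]$ is UWNC, $R[x]$ is WUU, $R$ is WUU, $R$ is UWNC, and $J(R) = \mathrm{Nil}(R)$ together with $U(R) = \pm 1 + J(R)$. The three items of the present corollary are precisely the fourth, first, and fifth of these conditions respectively, so the equivalences (i)~$\Leftrightarrow$~(ii)~$\Leftrightarrow$~(iii) follow at once.

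There is essentially no obstacle to the argument: the substantive work has already been absorbed into the proof of Theorem \ref{theo2.14}, where the $\alpha$-compatibility hypothesis together with the 2-primal assumption forced $J(R[x,\alpha]) = J(R)[x,\alpha]$ to be nil and arranged that units of the skew polynomial ring descend to units of $R$ modulo this nil Jacobson radical. Specializing to $\alpha = \mathrm{id}_R$ introduces no twist, so no fresh computation is required, and the corollary drops out directly.
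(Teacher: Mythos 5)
Your proposal is correct and matches the paper's intent exactly: the paper presents this corollary as a direct consequence of Theorem \ref{theo2.14}, obtained by taking $\alpha$ to be the identity endomorphism, for which compatibility is automatic and $R[x,\alpha]$ reduces to $R[x]$. The identification of the corollary's three items with conditions (iv), (i), and (v) of the theorem is exactly right, so nothing further is needed.
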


The following criterion is worthy of documentation.

\begin{proposition}\label{prop2.11}
Suppose $R$ is a commutative ring. Then, $R[x]$ is a UWNC ring if, and only if, $R$ is a UWNC ring.
\end{proposition}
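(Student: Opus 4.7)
The plan is to reduce this statement to Corollary \ref{cor2.15}, which has already done the heavy lifting for $2$-primal rings. The only observation needed is that every commutative ring is $2$-primal, because in a commutative ring the nilradical $\mathrm{Nil}_\ast(R)$ (the intersection of all prime ideals) coincides with the set of nilpotent elements $\mathrm{Nil}(R)$. Once this is noted, both implications fall out at once.

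For the forward direction ``$R[x]$ UWNC $\Rightarrow$ $R$ UWNC'', I would invoke Proposition \ref{prop0.2.5} applied to the surjection $R[x]\twoheadrightarrow R[x]/\langle x\rangle\cong R$; units of $R$ lift to units of $R[x]$ (embed as constant polynomials), so the hypothesis of that proposition is satisfied. Alternatively, this implication is also furnished by the equivalence (ii)$\Leftrightarrow$(i) in Corollary \ref{cor2.15}.

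For the converse ``$R$ UWNC $\Rightarrow$ $R[x]$ UWNC'', since $R$ is commutative, hence $2$-primal, Corollary \ref{cor2.15} (which is the $\alpha=\mathrm{id}$ case of Theorem \ref{theo2.14}) applies verbatim: from $R$ UWNC we obtain $J(R)=\mathrm{Nil}(R)$ together with $U(R)=\pm 1+J(R)$, and then (iii)$\Rightarrow$(ii) of that corollary delivers the UWNC property of $R[x]$.

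There is really no main obstacle here: the proposition is just the commutative specialization of the previously proved Corollary \ref{cor2.15}. The one point worth stating explicitly, to make the argument self-contained, is the observation that commutativity implies $2$-primality, which unlocks the application of the earlier corollary. The entire proof therefore reduces to a single line invoking Corollary \ref{cor2.15}.
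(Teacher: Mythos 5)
Your proposal is correct and coincides with the paper's own ``Method 2,'' which likewise observes that every commutative ring is $2$-primal and then invokes Corollary~\ref{cor2.15}; the forward direction via Proposition~\ref{prop0.2.5} and lifting of units modulo $\langle x\rangle$ also matches the paper's ``Method 1'' for necessity. No gaps.
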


\begin{proof}
\noindent{\bf Method 1:} For the necessity, let $R[x]$ be a UWNC ring. We know that $\dfrac{R[x]}{\langle x\rangle} \cong R$. It, therefore, suffices to show that $\dfrac{R[x]}{\langle x\rangle}$ is a UWNC ring. To this aim, choosing $u+\langle x\rangle \in U\left(\dfrac{R[x]}{\langle x\rangle}\right)$, we derive $u \in U(R[x])$, because all units of $\dfrac{R[x]}{\langle x\rangle}$ are lifted to units of $R[x]$. Thus, we write $u= \pm e+n$, where $e \in {\rm Id}(R[x])$ and $n \in {\rm Nil}(R[x])$. So, $$u+\langle x\rangle= \pm(e+\langle x\rangle)+(n+\langle x\rangle),$$ where $e+\langle x\rangle \in {\rm Id}\left(\dfrac{R[x]}{\langle x\rangle}\right)$ and $n+\langle x\rangle \in {\rm Nil}\left(\dfrac{R[x]}{\langle x\rangle}\right)$. Consequently, $u+\langle x\rangle$ is a weakly nil-clean element and $\dfrac{R[x]}{\langle  x\rangle }$ is a UWNC ring, as desired.

For the sufficiency, write $$f=a_0+a_1 x+\ldots + a_n x^n \in U (R[x]),$$ so $a_0 \in U (R)$ and $a_1,\ldots ,a_n \in {\rm Nil}(R)$. By hypothesis, we have $a_0= \pm e+n$, where $e \in {\rm Id}(R)$ and $n \in {\rm Nil}(R)$. This allows us to infer that $$f=( \pm e+n)+a_1 x+\cdots+a_n x^n= \pm e+(n+a_1 x+\cdots+a_n x^n),$$ where $e \in {\rm Id}(R[x])$ and $n+a_1 x+\cdots+a_n x^n \in {\rm Nil}(R[x])$. Finally, $f$ is a weakly nil-clean element, thus establishing the result.

\medskip

\noindent {\bf Method 2:} It is well know that every commutative ring is a $2$-primal ring, and hence the result can be deduced from Corollary \ref{cor2.15}.
\end{proof}

Incidentally, we are able to prove the following curious statement.

\begin{proposition}\label{prop0.2.6new}
Let $R$ be a ring, and $m,n\geq 1$. If the matrix rings ${\rm M}_{n}(R)$ and ${\rm M}_{m}(R)$ are both UWNC, then so is the triangular matrix ring ${\rm T}_{n+m}(R)$.
\end{proposition}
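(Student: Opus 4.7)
My plan is to reduce the statement to proving that $R$ itself is UNC, after which Proposition~\ref{prop2.10} directly yields ${\rm T}_{n+m}(R)$ UNC and hence UWNC. The reduction uses the nil-ideal $I\subseteq {\rm T}_{n+m}(R)$ of strictly upper triangular matrices: a direct check shows $I^{n+m}=0$ and ${\rm T}_{n+m}(R)/I\cong R^{n+m}$. By Theorem~\ref{theo2.7}(ii), ${\rm T}_{n+m}(R)$ is UWNC if and only if $R^{n+m}$ is UWNC; and by Corollary~\ref{cor2.6} (since $n+m\geq 2$), this in turn is equivalent to $R$ being UNC. So the entire task reduces to deriving $R$ UNC from the given hypotheses.

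For that derivation, my strategy is to establish the intermediate claim: if ${\rm M}_k(R)$ is UWNC for some $k\geq 2$, then $R$ is UNC (this covers the main case since $\max(n,m)\geq 2$ in the nontrivial situation). Via Theorem~\ref{theo2.7}(i) I pass modulo $J({\rm M}_k(R))={\rm M}_k(J(R))$ and work with ${\rm M}_k(\bar R)$ UWNC, where $\bar R=R/J(R)$ is semiprimitive. Next, consider the unit $u=1-2E_{11}\in {\rm M}_k(\bar R)$, where $E_{11}$ is the standard matrix-unit idempotent (available because $k\geq 2$); a direct computation gives $u^2=1$. Since ${\rm M}_k(\bar R)$ is UWNC, either $u$ or $-u$ is nil-clean, and then Lemma~\ref{lem0.2.2} forces the corresponding element to be unipotent. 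A short calculation in each case yields a nilpotent of the form $\pm 2\cdot(\text{nonzero idempotent})$ in ${\rm M}_k(\bar R)$; centrality of $2$ together with the semiprimitivity of $\bar R$ then compel $\bar 2=0$, that is, $2\in J(R)$. Now Lemma~\ref{lem2.13} upgrades ${\rm M}_k(R)$ from UWNC to UNC, and a final transfer to the corner $R\cong E_{11}{\rm M}_k(R)E_{11}$ yields $R$ UNC, as desired.

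The hardest step I anticipate is the final corner-transfer: the UNC property is not known a priori to descend to corner rings. To bypass this, I would leverage the already-established fact $2\in J(R)$ and note that, by Lemma~\ref{lem2.13}, proving $R$ UWNC is enough. Given $v\in U(R)$, I'd consider the scalar matrix $vI_k\in U({\rm M}_k(R))$, take its nil-clean decomposition $vI_k=E+N$, and analyze the image in the characteristic-$2$ semiprimitive quotient ${\rm M}_k(\bar R)$; the $(1,1)$-component of this decomposition, viewed inside $E_{11}{\rm M}_k(\bar R)E_{11}\cong \bar R$, should furnish a nil-clean representation of $\bar v$ in $\bar R$, which lifts via the nil ideal $J(R)$ to a weakly nil-clean representation of $v$ in $R$. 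With $R$ UNC in hand, Proposition~\ref{prop2.10} closes the argument, delivering ${\rm T}_{n+m}(R)$ UNC and in particular UWNC.
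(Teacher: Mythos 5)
Your route is entirely different from the paper's. The paper argues directly: it writes $V\in U({\rm T}_{n+m}(R))$ in $2\times 2$ block form, takes weakly nil-clean decompositions $V_{11}=\pm E_1+N_1$ in ${\rm M}_n(R)$ and $V_{22}=\pm E_2+N_2$ in ${\rm M}_m(R)$, and glues them into $\pm\begin{pmatrix}E_1&0\\0&E_2\end{pmatrix}+\begin{pmatrix}N_1&A_{12}\\0&N_2\end{pmatrix}$; no reduction to a property of $R$ itself is attempted. Your reduction via Proposition~\ref{prop2.10} and Corollary~\ref{cor2.6} is correct and genuinely illuminating: it shows the conclusion is \emph{equivalent} to $R$ being UNC. (It also exposes that the case $n=m=1$, which you dismiss as trivial, is in fact where the statement breaks: ${\rm M}_1(\mathbb{Z}_3)=\mathbb{Z}_3$ is UWNC but not UNC, so ${\rm T}_2(\mathbb{Z}_3)$ is not UWNC --- this is precisely the sign-matching point that the paper's gluing argument passes over silently when $V_{11}$ and $V_{22}$ require opposite signs.) Your intermediate steps for $k\ge 2$ are sound: $u=1-2E_{11}$ satisfies $u^2=1$, Lemma~\ref{lem0.2.2} forces $2E_{11}$ or $2(1-E_{11})$ to be nilpotent over $\bar R=R/J(R)$, centrality of $2$ and $J(\bar R)=0$ give $2\in J(R)$, and Lemma~\ref{lem2.13} then upgrades ${\rm M}_k(R)$ to UNC.

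The genuine gap is the final corner transfer, exactly where you flag it. From a nil-clean decomposition $vI_k=E+N$ in ${\rm M}_k(R)$ you cannot read off one for $v$ in the corner $E_{11}{\rm M}_k(R)E_{11}\cong R$: since $E_{11}$ need not commute with $E$ or $N$, the element $E_{11}EE_{11}$ is in general not an idempotent of the corner ring and $E_{11}NE_{11}$ is in general not nilpotent, and passing to the characteristic-$2$ semiprimitive quotient does nothing to repair this. Whether the UNC or UWNC property of ${\rm M}_k(R)$ descends to $R$ is exactly the kind of question the paper leaves open (see the final Problem of Section 4), so ``should furnish a nil-clean representation'' is an unsubstantiated leap rather than a routine verification. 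As written, your argument establishes only that ${\rm M}_k(R)$ is UNC for $k=\max(m,n)\ge 2$, not that $R$ is, and therefore does not reach the conclusion.
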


\begin{proof}
Let $V\in U ({\rm T}_{n+m}(R))$ be the $(n+m)\times (n+m)$ triangular matrix which we will write in the block decomposition form as follows $V=\begin{pmatrix}
V_{11} & A_{12} \\
0 & V_{22}
\end{pmatrix}$, where $V_{11}\in U({\rm M}_{n}(R))$, $V_{22}\in U({\rm M}_{m}(R))$ and $A_{12}$ is appropriately sized rectangular matrices. By hypothesis, there exist idempotent matrices $E_{1}$, $E_{2}$ and nilpotent matrices $N_{1}$, $N_{2}$ in ${\rm M}_{n}(R)$ and ${\rm M}_{m}(R)$ such that $V_{11}=\pm E_{1}+N_{1}$ and $V_{22}=\pm E_{2}+N_{2}$. Thus, we obtain the decomposition
\begin{align*}
\begin{pmatrix}
V_{11} & A_{12}\\
0 & V_{22}
\end{pmatrix}& =\begin{pmatrix}
\pm E_{1}+N_{1} & A_{12}\\
0 & \pm E_{2}+N_{2}
\end{pmatrix} \\
& =\pm \begin{pmatrix}
E_{1} & 0\\
0 & E_{2}
\end{pmatrix}+ \begin{pmatrix}
N_{1} & A_{12}\\
0 & N_{2}
\end{pmatrix}.
\end{align*}
Since $E_{1}$, $E_{2}$ are idempotents, then an easy verification guarantees that $\begin{pmatrix}
E_{1} & 0\\
0 & E_{2}
\end{pmatrix}$ is an idempotent. It is also readily to see that $\begin{pmatrix}
N_{1} & A_{12}\\
0 & N_{2}
\end{pmatrix}$ is a nilpotent. Thus, the above decomposition is the desired weakly nil-clean decomposition.
\end{proof}

Let $A$, $B$ be two rings and $M$, $N$ be $(A,B)$-bi-module and $(B,A)$-bi-module, respectively. Also, we consider the bilinear maps $\phi :M\otimes_{B}N\rightarrow A$ and $\psi:N\otimes_{A}M\rightarrow B$ that apply to the following properties.
$$Id_{M}\otimes_{B}\psi =\phi \otimes_{A}Id_{M},Id_{N}\otimes_{A}\phi =\psi \otimes_{B}Id_{N}.$$
For $m\in M$ and $n\in N$, define $mn:=\phi (m\otimes n)$ and $nm:=\psi (n\otimes m)$. Now the $4$-tuple $R=\begin{pmatrix}
A & M\\
N & B
\end{pmatrix}$ becomes to an associative ring with obvious matrix operations that is called a {\it Morita context ring}. Denote two-side ideals $Im \phi$ and $Im \psi$ to $MN$ and $NM$, respectively, that are called the {\it trace ideals} of the Morita context (compare with \cite{9} as well).

We now have at our disposal all the ingredients necessary to establish the following statement.

\begin{proposition}\label{prop2.16}
Let $R=\left(\begin{array}{ll}A & M \\ N & B\end{array}\right)$ be a Morita context ring such that $MN$ and $NM$ are nilpotent ideals of $A$ and $B$, respectively. If $R$ is a UWNC ring, then $A$ and $B$ are UWNC rings. The converse holds provided one of the $A$ or $B$ is UNC and the other is UWNC.
\end{proposition}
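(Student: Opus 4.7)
The plan is to construct a nilpotent two-sided ideal $J$ of $R$ whose quotient is $A/MN \times B/NM$, and then reduce the statement to Propositions~\ref{prop2.3} and~\ref{prop2.4} via Theorem~\ref{theo2.7}(ii).

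First, I would take
$$J := \begin{pmatrix} MN & M \\ N & NM \end{pmatrix}$$
and verify that $J$ is a two-sided ideal of $R$, using that $MN$ is an ideal of $A$, $NM$ is an ideal of $B$, and $M, N$ are the appropriate bimodules; the natural projection then yields a ring isomorphism $R/J \cong A/MN \times B/NM$. The nontrivial step is the nilpotency of $J$. Writing $K := \begin{pmatrix} 0 & M \\ N & 0 \end{pmatrix}$ and $D := \begin{pmatrix} MN & 0 \\ 0 & NM \end{pmatrix}$ as additive subgroups of $R$, I would compute $K^2 = D$ and, inductively,
$$K^{2k} = \begin{pmatrix} (MN)^k & 0 \\ 0 & (NM)^k \end{pmatrix},$$
so that $K^i = 0$ for all $i \geq 2p$, where $p$ is chosen with $(MN)^p = 0 = (NM)^p$. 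Since every element of $J$ decomposes as $k + d$ with $k \in K$ and $d \in K^2$, expanding a product $j_1 \cdots j_m$ of elements of $J$ into $2^m$ summands places each summand into $K^{a + 2b}$, where $a + b = m$ counts factors from $K$ versus from $D$; as $a + 2b \geq m$, taking $m = 2p$ forces $J^{2p} = 0$, so $J$ is nil.

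Once the nil ideal is in hand, I would close the argument as follows. Since $MN$ and $NM$ are nil, $A$ is UWNC (respectively, UNC) if and only if $A/MN$ is UWNC (respectively, UNC), via Theorem~\ref{theo2.7}(ii) and its straightforward UNC analog, which uses exactly the same lifting argument; likewise for $B$. Also, $R$ is UWNC if and only if $A/MN \times B/NM$ is UWNC. The forward implication then drops out of Proposition~\ref{prop2.4}, which forces each factor of a UWNC direct product to be UWNC, so $A$ and $B$ are UWNC. For the converse, with one of $A, B$ assumed UNC and the other UWNC, the same transfer shows one of $A/MN, B/NM$ is UNC and the other UWNC, so Proposition~\ref{prop2.3} yields that the product is UWNC, whence $R$ is.

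The main obstacle is the nilpotency of $J$ itself: the off-diagonal piece $K$ is easily seen to satisfy $K^{2p} = 0$, but the presence of $MN$ and $NM$ on the diagonal of $J$ looks problematic a priori, since a naive bound would only give $J^2 \subseteq J$ again. The weighting $\deg K = 1$, $\deg D = 2$ used in the expansion argument is what overcomes this and gives the uniform bound $J^{2p} = 0$.
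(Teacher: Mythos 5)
Your proposal is correct, and it reaches the conclusion by a genuinely different route from the paper. The paper works through the Jacobson radical: it invokes Tang--Li--Zhou to identify $J(R)=\left(\begin{smallmatrix}J(A) & M\\ N & J(B)\end{smallmatrix}\right)$, deduces $R/J(R)\cong A/J(A)\times B/J(B)$, and then proves $J(R)$ is nil element-by-element, first raising a given element of $J(R)$ to a power so that it lands in $\left(\begin{smallmatrix}MN & M\\ N & NM\end{smallmatrix}\right)$ and then computing the even powers of that ideal explicitly to show they vanish. You instead bypass $J(R)$ altogether and quotient directly by the two-sided ideal $J=\left(\begin{smallmatrix}MN & M\\ N & NM\end{smallmatrix}\right)$, establishing its nilpotency via the decomposition $J=K+K^{2}$ with $K=\left(\begin{smallmatrix}0 & M\\ N & 0\end{smallmatrix}\right)$ and the degree-counting argument $a+2b\geq m$; this is self-contained (no external reference for the radical of a Morita context is needed), and it even yields the slightly stronger fact that $J$ is nilpotent as an ideal rather than merely nil. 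The computational core --- nilpotency of $\left(\begin{smallmatrix}MN & M\\ N & NM\end{smallmatrix}\right)$ under the hypothesis $(MN)^{p}=0=(NM)^{p}$ --- is the same in both arguments, just organized differently, and the final reduction via Theorem~\ref{theo2.7}(ii) together with Propositions~\ref{prop2.3} and~\ref{prop2.4} matches the paper's use of those results. The one point you wave at, the UNC analogue of Theorem~\ref{theo2.7}(ii) needed to pass from ``$A$ is UNC'' to ``$A/MN$ is UNC'' in the converse, is indeed routine (the same lifting of idempotents and units modulo a nil ideal); the paper sources it from \cite[Theorem 2.5]{5}, and you should do the same or record the one-line argument.
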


\begin{proof}
Canonically, $\dfrac{M}{M_{0}}$ is an $\left(\dfrac{A}{J(A)}, \dfrac{B}{J(B)}\right)$-bi-module and $\dfrac{N}{N_{0}}$ is a $\left(\dfrac{B}{J(B)}, \dfrac{A}{J(A)} \right)$-bi-module, and this induces a Morita context $\begin{pmatrix}
\dfrac{A}{J(A)} & \dfrac{M}{M_{0}} \\
\dfrac{N}{N_{0}} & \dfrac{B}{J(B)}
\end{pmatrix}$, where the context products are given by $$(x+M_{0})(y+N_{0})=xy+J(A), (y+N_{0})(x+M_{0})=yx+J(B)$$ for all $x\in M$ and $y\in N$. Thus, an appeal to \cite[Lemma 4.5]{9}, is a guarantor that $$\dfrac{R}{J(R)}\cong \begin{pmatrix}
\dfrac{A}{J(A)} & \dfrac{M}{M_{0}} \\
\dfrac{N}{N_{0}} & \dfrac{B}{J(B)}
\end{pmatrix}.$$ However, as $MN$ and $NM$ are nilpotent ideals of $A$ and $B$, respectively, we then have that $MN \subseteq J(A)$ and $NM\subseteq J(B)$. Therefore, in view of \cite{21}, we argue that $J(R)=\begin{pmatrix}
J(A) & M \\
N & J(B)
\end{pmatrix}$ and hence $\dfrac{R}{J(R)}\cong \dfrac{A}{J(A)}\times \dfrac{B}{J(B)}$.
Since $R$ is a UWNC ring, then the factor $\dfrac{R}{J(R)}$ is a UWNC ring and $J(R)$ is nil consulting with Theorem \ref{theo2.7}, so it follows that both $\dfrac{A}{J(A)}$ and $\dfrac{B}{J(B)}$ are UWNC. As $J(R)$ is nil, $J(A)$ and $J(B)$ are nil too. Thus, $A$ and $B$ are UWNC as well.

Oppositely, assuming that $A$ is a UNC ring and $B$ is a UWNC ring, we conclude that $\dfrac{R}{J(R)}$ is a UWNC ring by a combination of \cite[Theorem 2.5]{5}, Theorem \ref{theo2.7} and Proposition \ref{prop2.3}. It then suffices to show that $J(R)$ is nil. To this target, suppose
$r=\begin{pmatrix}
a & m\\
n & b
\end{pmatrix}\in J(R)$. Then, $a\in J(A)$, $b\in J(B)$. In virtue of Theorem \ref{theo2.7}, both ideals $J(A)$ and $J(B)$ are nil. Thus, we can find $n\in \mathbb{N}$ such that $a^{n}=0$ in $A$ and $b^{n}=0$ in $B$. So,
$$\begin{pmatrix}
a & m\\
n & b
\end{pmatrix}^{n+1}\subseteq \begin{pmatrix}
MN & M\\
N & NM
\end{pmatrix}.$$
Clearly,
$$ \begin{pmatrix}
MN & M\\
N & NM
\end{pmatrix}^{2}=\begin{pmatrix}
MN & (MN)M \\
(NM)N & NM
\end{pmatrix}.$$
Moreover, for any $j\in \mathbb{N}$, one easily checks that
$$\begin{pmatrix}
MN & M\\
N & NM
\end{pmatrix}^{2j}=\begin{pmatrix}
MN & (MN)M\\
(NM)N  & NM
\end{pmatrix}^{j}=\begin{pmatrix}
(MN)^{j} & (MN)^{j}M\\
(NM)^{j}N & (NM)^{j}
\end{pmatrix}.$$
By hypothesis, we may assume that $(MN)^{p}=0$ in $A$ and $(NM)^{p}=0$ in $B$. Therefore,
$$\begin{pmatrix}
MN & M\\
N & NM
\end{pmatrix}^{2p}=0.$$
Consequently,
$\begin{pmatrix}
a & m\\
n & b
\end{pmatrix}^{2p(n+1)}=0$, and so $J(R)$ is indeed nil, as desired.
\end{proof}



We now state the following.

\begin{example}
Consider the Morita context $R =
\begin{pmatrix}
\mathbb{Z}_4 & \mathbb{Z}_4\\
2\mathbb{Z}_4 & \mathbb{Z}_4
\end{pmatrix}$, where the context products are the same as the product in $\mathbb{Z}_4$. Then, we claim that $R$ is UWNC. Since $\mathbb{Z}_4$ is obviously UNC, so we are done exploiting Proposition \ref{prop2.16}. This substantiates our claim.
\end{example}

Given a ring $R$ and a central elements $s$ of $R$, the $4$-tuple $\begin{pmatrix}
R & R\\
R & R
\end{pmatrix}$ becomes a ring with addition component-wise and with multiplication defined by
$$\begin{pmatrix}
a_{1} & x_{1}\\
y_{1} & b_{1}
\end{pmatrix}\begin{pmatrix}
a_{2} & x_{2}\\
y_{2} & b_{2}
\end{pmatrix}=\begin{pmatrix}
a_{1}a_{2}+sx_{1}y_{2} & a_{1}x_{2}+x_{1}b_{2} \\
y_{1}a_{2}+b_{1}y_{2} & sy_{1}x_{2}+b_{1}b_{2}
\end{pmatrix}.$$
This ring is denoted by ${\rm K}_{s}(R)$. A Morita context
$\begin{pmatrix}
A & M\\
N & B
\end{pmatrix}$ with $A=B=M=N=R$ is called a {\it generalized matrix ring} over $R$. It was observed by Krylov in \cite{18} that a ring $S$ is a generalized matrix ring over $R$ if, and only if, $S={\rm K}_{s}(R)$ for some $s\in {\rm Z}(R)$. Here $MN=NM=sR$, so $MN\subseteq J(A)\Longleftrightarrow s\in J(R)$, $NM\subseteq J(B)\Longleftrightarrow s\in J(R)$, and $MN$, $NM$  are nilpotent $\Longleftrightarrow s$ is a nilpotent.

\medskip

As three corollaries, we extract:

\begin{corollary}\label{cor2.17}
Let $R$ be a ring and $s\in {\rm Z}(R)\cap {\rm Nil}(R)$. If ${\rm K}_{s}(R)$ is a UWNC ring, then $R$ is a UWNC ring. The converse holds, provided $R$ is a UNC ring.
\end{corollary}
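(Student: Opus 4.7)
The plan is to deduce this corollary directly from Proposition \ref{prop2.16} by specializing to the generalized matrix ring setting.

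First, I would recall from the discussion preceding the corollary that ${\rm K}_{s}(R)$ is precisely the Morita context ring $\begin{pmatrix} A & M \\ N & B \end{pmatrix}$ with $A=B=M=N=R$ and with trace ideals $MN=NM=sR$. Since $s$ is a central nilpotent element, say $s^k=0$, we obtain $(sR)^k=s^kR=0$, so both $MN$ and $NM$ are nilpotent ideals of $A=R$ and $B=R$ respectively. This means ${\rm K}_{s}(R)$ satisfies the hypotheses of Proposition \ref{prop2.16}.

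For the forward direction, suppose ${\rm K}_{s}(R)$ is UWNC. Proposition \ref{prop2.16} then immediately yields that $A$ and $B$ are UWNC; but $A=B=R$, so $R$ is UWNC. For the converse direction, suppose $R$ is UNC. By Example \ref{exam1.7}(v), every UNC ring is UWNC, so $R$ is both UNC and UWNC. Since $A=B=R$, the hypothesis of the converse part of Proposition \ref{prop2.16} (that one of $A,B$ is UNC and the other is UWNC) is trivially satisfied. Together with the already-verified nilpotence of $MN$ and $NM$, Proposition \ref{prop2.16} gives that ${\rm K}_{s}(R)$ is UWNC.

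I do not expect any genuine obstacle here, since the corollary is essentially a direct translation of Proposition \ref{prop2.16} to the generalized matrix ring setting; the only thing to check carefully is that the centrality and nilpotence of $s$ do produce the required nilpotent trace ideals, which is immediate from $(sR)^k = s^k R = 0$.
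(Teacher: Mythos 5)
Your proof is correct and follows exactly the route the paper intends: the corollary is stated as an immediate consequence of Proposition~\ref{prop2.16}, using the observation (made just before the statement) that for ${\rm K}_{s}(R)$ one has $MN=NM=sR$, which is nilpotent precisely when $s$ is nilpotent. Your verification that $(sR)^k=s^kR=0$ for central nilpotent $s$, and the application of both directions of Proposition~\ref{prop2.16} with $A=B=R$, is precisely what the paper leaves to the reader.
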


An other construction of interest is the following one.

\begin{example}
As $\mathbb{Z}_4$ is a UNC ring, it follows from Corollary \ref{cor2.17} that the generalized matrix ring ${\rm K}_{(2)} (\mathbb{Z}_4) = \{
\begin{pmatrix}
a & b\\
c & d
\end{pmatrix} | a, b, c, d \in \mathbb{Z}_4 \}$ is a UWNC ring.
\end{example}

Following Tang and Zhou (cf. \cite{19}), for $n\geq 2$ and for $s\in {\rm Z}(R)$, the $n\times n$ formal matrix ring over $R$ defined by $s$, and denoted by ${\rm M}_{n}(R;s)$, is the set of all $n\times n$ matrices over $R$ with usual addition of matrices and with multiplication defined below:

\noindent For $(a_{ij})$ and $(b_{ij})$ in ${\rm M}_{n}(R;s)$,
$$(a_{ij})(b_{ij})=(c_{ij}), \quad \text{where} ~~ (c_{ij})=\sum s^{\delta_{ikj}}a_{ik}b_{kj}.$$
Here, $\delta_{ijk}=1+\delta_{ik}-\delta_{ij}-\delta_{jk}$, where $\delta_{jk}$, $\delta_{ij}$, $\delta_{ik}$ are the Kroncker delta symbols.

We, therefore, have the following.

\begin{corollary}\label{cor0.2.7}
Let $R$ be a ring and $s\in {\rm Z}(R)\cap {\rm Nil}(R)$. If ${\rm M}_{n}(R;s)$ is a UWNC ring, then $R$ is a UWNC ring. The converse holds, provided $R$ is a UNC ring.
\end{corollary}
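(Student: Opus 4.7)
My plan is to realize ${\rm M}_n(R;s)$ as a Morita context and reduce to Corollary~\ref{cor2.17} by induction on $n$. For $n \geq 3$, I would decompose
$$ {\rm M}_n(R;s) \;\cong\; \begin{pmatrix} R & M \\ N & {\rm M}_{n-1}(R;s) \end{pmatrix},$$
where the top-left corner consists of matrices supported at position $(1,1)$ (which restricts to a copy of $R$ since $\delta_{111}=0$), the bottom-right corner of matrices supported at positions $(i,j)$ with $i,j \geq 2$ (which restricts to ${\rm M}_{n-1}(R;s)$), while $M$ is the ``first row'' $1 \times (n-1)$ module and $N$ is the ``first column'' $(n-1) \times 1$ module. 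This generalizes the $n=2$ presentation ${\rm M}_2(R;s) = {\rm K}_s(R)$ used in Corollary~\ref{cor2.17}.

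The second step is to verify that the trace ideals $MN$ and $NM$ are nilpotent. Every summand contributing to a product of an element of $M$ with an element of $N$ (or vice versa) arises from a summation index $k$ that crosses the block boundary, so the multiplication formula $c_{ij} = \sum_k s^{\delta_{ikj}} a_{ik} b_{kj}$ forces such a summand to carry at least one factor of $s$. Hence $MN \subseteq sR$ and $NM \subseteq s \cdot {\rm M}_{n-1}(R;s)$, and both of these are nilpotent ideals because $s \in {\rm Nil}(R) \cap {\rm Z}(R)$ is central and nilpotent. This verifies the hypothesis of Proposition~\ref{prop2.16}.

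With the Morita context set up, the forward direction is immediate from Proposition~\ref{prop2.16}: if ${\rm M}_n(R;s)$ is UWNC, then so is the $(1,1)$-corner $R$. For the converse I proceed by induction on $n$. The base case $n=2$ is precisely Corollary~\ref{cor2.17} applied to ${\rm K}_s(R)$. For the inductive step, assume $R$ is UNC; by the inductive hypothesis ${\rm M}_{n-1}(R;s)$ is UWNC, so the converse half of Proposition~\ref{prop2.16} --- applied with the UNC corner $R$ and the UWNC corner ${\rm M}_{n-1}(R;s)$ --- yields that ${\rm M}_n(R;s)$ is UWNC.

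The main obstacle will be the bookkeeping with the exponent $\delta_{ikj}$: one must confirm both that the diagonal corners really form subrings isomorphic to $R$ and ${\rm M}_{n-1}(R;s)$ under the inherited multiplication, and that \emph{every} cross-block product absorbs at least one factor of $s$ (so the trace ideals actually land inside $sR$ and $s\cdot {\rm M}_{n-1}(R;s)$). Once this verification on the Tang--Zhou multiplication is carried out, the Morita-context machinery of Proposition~\ref{prop2.16} does the rest.
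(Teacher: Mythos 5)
Your proposal follows essentially the same route as the paper: an induction on $n$ realizing ${\rm M}_n(R;s)$ as a Morita context with corners $R$ and ${\rm M}_{n-1}(R;s)$, nilpotent trace ideals forced by the powers of $s$ in the cross-block products, and an appeal to Proposition~\ref{prop2.16}, the paper merely placing $R$ in the opposite corner. One small correction: computing $\delta_{1k1}=2$ for $k\neq 1$ shows ${\rm M}_2(R;s)\cong {\rm K}_{s^2}(R)$ rather than ${\rm K}_{s}(R)$, but since $s^2\in {\rm Z}(R)\cap {\rm Nil}(R)$, Corollary~\ref{cor2.17} still applies and your base case is unaffected.
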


\begin{proof}
If $n = 1$, then ${\rm M}_n(R;s) = R$. So, in this case, there is nothing to prove. Let $n=2$. By the definition of ${\rm M}_n(R;s)$, we have ${\rm M}_2 (R;s) \cong {\rm K}_{s^2} (R)$. Apparently, $s^2 \in {\rm Nil} (R) \cap {\rm Z} (R)$, so the claim holds for $n = 2$ with the help of Corollary \ref{cor2.17}.

To proceed by induction, assume now that $n>2$ and that the claim holds for ${\rm M}_{n-1} (R;s)$. Set $A := {\rm M}_{n-1} (R;s)$. Then, ${\rm M}_n (R;s) =
\begin{pmatrix}
A & M\\
N & R
\end{pmatrix}$
is a Morita context, where $$M =
\begin{pmatrix}
M_{1n}\\
\vdots\\
M_{n-1, n}
\end{pmatrix}
\quad \text{and} \quad  N = (M_{n1} \dots M_{n, n-1})$$ with $M_{in} = M_{ni} = R$ for all $i = 1, \dots, n-1,$ and
\begin{align*}
&\psi: N \otimes M \rightarrow N, \quad n \otimes m \mapsto snm\\
&\phi : M \otimes N \rightarrow M, \quad  m \otimes n \mapsto smn.
\end{align*}
Besides, for $x =
\begin{pmatrix}
x_{1n}\\
\vdots\\
x_{n-1, n}
\end{pmatrix}
\in M$ and $y = (y_{n1} \dots y_{n, n-1}) \in N$, we write $$xy =
\begin{pmatrix}
s^2x_{1n}y_{n1} & sx_{1n}y_{n2} & \dots & sx_{1n}y_{n, n-1}\\
sx_{2n}y_{n1} & s^2x_{2n}y_{n2} & \dots & sx_{2n}y_{n, n-1}\\
\vdots & \vdots &\ddots & \vdots\\
sx_{n-1, n}y_{n1} & sx_{n-1, n}y_{n2} & \dots & s^2x_{n-1, n}y_{n, n-1}
\end{pmatrix} \in sA$$ and $$yx = s^2y_{n1}x_{1n} + s^2y_{n2}x_{2n} + \dots + s^2y_{n, n-1}x_{n-1, n} \in s^2 R.$$ Since $s$ is nilpotent, we see that $MN$ and $NM$ are nilpotent too. Thus, we obtain that $$\frac{{\rm M}_n (R; s)}{J({\rm M}_n (R; s))} \cong \frac{A}{J (A)} \times \frac{R}{J (R)}.$$ Finally, the induction hypothesis and Proposition \ref{prop2.16} yield the claim after all.
\end{proof}

A Morita context $\begin{pmatrix}
A & M\\
N & B
\end{pmatrix}$ is called {\it trivial}, if the context products are trivial, i.e., $MN=0$ and $NM=0$. We now have
$$\begin{pmatrix}
A & M\\
N & B
\end{pmatrix}\cong {\rm T}(A\times B, M\oplus N),$$
where
$\begin{pmatrix}
A & M\\
N & B
\end{pmatrix}$ is a trivial Morita context consulting with \cite{20}.

What we can now offer is the following.

\begin{corollary}\label{cor0.2.8}
If the trivial Morita context
$\begin{pmatrix}
A & M\\
N & B
\end{pmatrix}$ is a UWNC ring, then $A$, $B$ are UWNC rings. The converse holds if one of the rings $A$ or $B$ is UWNC and the other is UNC.
\end{corollary}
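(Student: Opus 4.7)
The plan is to reduce the statement to results already established for trivial extensions and direct products, by exploiting the isomorphism
\[
\begin{pmatrix} A & M\\ N & B \end{pmatrix}\;\cong\;{\rm T}(A\times B,\,M\oplus N)
\]
recorded in the paragraph preceding the corollary. Let $S$ denote the trivial Morita context.

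First I would invoke Corollary~\ref{cor2.8}(i) applied to the ring $A\times B$ and the $(A\times B)$-bi-module $M\oplus N$: it gives that ${\rm T}(A\times B,\,M\oplus N)$ is UWNC if and only if $A\times B$ is UWNC. Thus the UWNC property of $S$ is equivalent to that of the product $A\times B$, and the problem collapses to a question about a direct product of two rings, which is precisely the setting of Proposition~\ref{prop2.4}.

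For the forward implication, assume $S$ is UWNC. Then $A\times B$ is UWNC by the above reduction, and Proposition~\ref{prop2.4} yields that both $A$ and $B$ are UWNC (with the additional information that at most one of them fails to be UNC). Conversely, if, say, $A$ is UNC and $B$ is UWNC, then Proposition~\ref{prop2.3} shows directly that $B\times A$ (hence $A\times B$) is UWNC; reapplying Corollary~\ref{cor2.8}(i) then gives that $S\cong {\rm T}(A\times B,\,M\oplus N)$ is UWNC.

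I do not foresee a real obstacle here: the corollary is essentially a packaging of two earlier results via the trivial-extension identification, and the only thing to be careful about is that $M\oplus N$ is indeed an $(A\times B)$-bi-module on which $A\times B$ acts so that $M\oplus N$ is a square-zero ideal of the formal matrix ring, which is exactly what ``trivial Morita context'' (i.e.\ $MN=0$ and $NM=0$) ensures. Once that observation is in place, the argument is an immediate chain of equivalences.
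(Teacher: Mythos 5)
Your proposal is correct and follows essentially the same route as the paper: the paper likewise identifies the trivial Morita context with ${\rm T}(A\times B, M\oplus N)$ and then cites Corollary~\ref{cor2.8} together with Proposition~\ref{prop2.4} (your use of Proposition~\ref{prop2.3} for the converse is just the ingredient already inside Proposition~\ref{prop2.4}). No substantive difference.
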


\begin{proof}
It is apparent to see that the isomorphisms
$$\begin{pmatrix}
A & M\\
N & B
\end{pmatrix} \cong {\rm T}(A\times B,M\oplus N) \cong \begin{pmatrix}
A\times B & M\oplus N\\
0 & A \times B
\end{pmatrix}$$ are fulfilled. Then, the rest of the proof follows combining Corollary \ref{cor2.8} and Proposition \ref{prop2.4}.
\end{proof}


We now intend to prove the following.

\begin{theorem}\label{theo0.2.3new}
Let $R$ be a local ring. Then, the following are equivalent:
\begin{enumerate}
\item
$R$ is a UWNC ring.
\item
$R$ is a weakly nil-clean ring.
\end{enumerate}
\end{theorem}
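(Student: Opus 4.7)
The plan is short: the forward direction is immediate, and the reverse direction reduces to two routine observations using the local hypothesis and Theorem~\ref{theo2.7}.

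For (ii)$\Rightarrow$(i), there is nothing to do: every unit is in particular an element of $R$, so it is weakly nil-clean by hypothesis.

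For (i)$\Rightarrow$(ii), I would exploit the dichotomy that in a local ring $R$ every element lies either in $U(R)$ or in the unique maximal ideal, which is $J(R)$; that is, $R = U(R) \cup J(R)$. First I would invoke Theorem~\ref{theo2.7}(i) to conclude that, since $R$ is UWNC, the Jacobson radical $J(R)$ is nil. Now take an arbitrary $a \in R$. If $a \in U(R)$, then $a$ is weakly nil-clean directly from the UWNC assumption. If instead $a \in J(R)$, then $a$ is nilpotent by the previous step, so the decomposition $a = 0 + a$ displays $a$ as the sum of the idempotent $0$ and the nilpotent $a$; in particular $a$ is (strongly) nil-clean, hence weakly nil-clean. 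This exhausts both cases, so every element of $R$ is weakly nil-clean, i.e., $R$ is a weakly nil-clean ring.

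There is no real obstacle here; the only nontrivial ingredient is the nilness of $J(R)$ in a UWNC ring, which is already established in Theorem~\ref{theo2.7}. The locality hypothesis is used precisely to ensure that the non-units coincide with $J(R)$, so that handling units and handling the radical together exhausts all of $R$.
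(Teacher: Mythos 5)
Your proof is correct, but it follows a genuinely different and more elementary route than the paper's. The paper argues as follows: since $R$ is local it has only trivial idempotents, hence is abelian, so Corollary~\ref{cor0.2.5} upgrades UWNC to WUU; then, since every local ring is clean, the external result \cite[Corollary 2.15]{4} (clean $+$ WUU $\Rightarrow$ strongly weakly nil-clean) finishes the job, in fact yielding the stronger conclusion that $R$ is \emph{strongly} weakly nil-clean. You instead use the dichotomy $R = U(R) \cup J(R)$ available in a local ring, extract nilness of $J(R)$ from Theorem~\ref{theo2.7}(i), and then verify the weakly nil-clean decomposition element by element: units are handled by the UWNC hypothesis directly, and elements of $J(R)$ are nilpotent, hence nil-clean via $a = 0 + a$. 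This is self-contained, avoids both the WUU machinery and the citation to \cite{4}, and is arguably the more transparent argument for the statement as written; what it gives up is the stronger "strongly weakly nil-clean" conclusion that the paper's route produces as a byproduct (though one could recover it by noting that in a local ring the idempotent in any weakly nil-clean decomposition of a unit must be $1$, which is central). Both proofs are valid.
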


\begin{proof}
(i) $\Rightarrow$ (ii). As $R$ is local, one finds that ${\rm Id}(R)=\{ 0,1\}$, and so $R$ is abelian. Therefore, $R$ is WUU owing to Corollary \ref{cor0.2.5}. Also, $R$ is clean, because every local ring is clean. Thus, $R$ is a clean WUU and we apply \cite[Corollary 2.15]{4} to find that $R$ is strongly weakly nil-clean and so it is weakly nil-clean.\\
(ii) $\Rightarrow$ (i). It is clear.
\end{proof}

We say that $B$ is a unital subring of a ring $A$ if $\emptyset \neq B\subseteq A$ and, for any $x,y\in B$, the relations $x-y$, $xy\in B$ and $1_{A}\in B$ hold. Let $A$ be a ring and $B$ a unital subring of $A$, and denote by $R[A,B]$ the set $\lbrace (a_{1},\ldots ,a_{n},b,b,\ldots ): a_{i}\in A,b\in B,1\leq i\leq n \rbrace$. Then, $R[A,B]$ forms a ring under the usual component-wise addition and multiplication.

\medskip

We, thereby, establish the following.

\begin{proposition}\label{prop0.2.9}
Let $A$ be a ring and a unital subring $B$ of $A$. Then, the following are equivalent:
\begin{enumerate}
\item
$A$ and $B$ are UWNC.
\item
$R[A,B]$ is UWNC.
\end{enumerate}
\end{proposition}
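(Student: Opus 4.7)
The plan is to handle the two implications separately: (ii)$\Rightarrow$(i) via the factor-ring machinery of Proposition~\ref{prop0.2.5}, and (i)$\Rightarrow$(ii) by reducing to the product case covered by Propositions~\ref{prop2.3} and~\ref{prop2.4}.

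For (ii)$\Rightarrow$(i), I would exhibit two natural surjective ring homomorphisms out of $R[A,B]$: the tail map $\pi_\infty\colon R[A,B]\to B$ sending $(a_1,\ldots,a_n,b,b,\ldots)\mapsto b$, and the first-coordinate projection $\pi_1\colon R[A,B]\to A$. Every unit of $B$ lifts through $\pi_\infty$ via the constant-sequence section $b\mapsto (b,b,\ldots)$, and every unit of $A$ lifts through $\pi_1$ via the section $a\mapsto (a,1,1,\ldots)$, since $(a,1,1,\ldots)$ is a unit of $R[A,B]$ precisely when $a\in U(A)$. Two applications of Proposition~\ref{prop0.2.5} then deliver that both $A$ and $B$ are UWNC.

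For (i)$\Rightarrow$(ii), my plan is to exploit the directed union description $R[A,B]=\bigcup_{n\geq 0} R_n$, where $R_n:=\{(a_1,\ldots,a_n,b,b,\ldots)\mid a_i\in A,\, b\in B\}$ is a subring isomorphic to $A^n\times B$. Given any $u\in U(R[A,B])$, a short check shows that $u^{-1}$ is eventually constant with the same stabilisation index, so that $u$ and $u^{-1}$ both lie in some common $R_n$ and hence $u\in U(R_n)$. I would then apply Proposition~\ref{prop2.4} to $R_n\cong A^n\times B$ to produce a weakly nil-clean decomposition $u=\pm E+F$ with $E\in{\rm Id}(R_n)$ and $F\in{\rm Nil}(R_n)$, which, because $R_n\subseteq R[A,B]$, is automatically a weakly nil-clean decomposition in $R[A,B]$.

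The main obstacle I expect is feeding Proposition~\ref{prop2.4} on $A^n\times B$ when $n\geq 2$: that proposition requires at most one factor to fail UNC, whereas (i) only supplies UWNC on both $A$ and $B$. To clear this hurdle I would re-examine the coordinate-wise decompositions $u_i=\varepsilon_i e_i+f_i$ in $A$ and $v=\varepsilon e+f$ in $B$ supplied by the hypothesis, and align the signs into a single global $\pm$ by replacing, wherever needed, a decomposition $w=e+n$ by $-w=(-e)+(-n)$ and then absorbing the sign into the global factor. This mimics the two-case argument in the proof of Proposition~\ref{prop2.3} and is the technical heart of the argument; the key point is that since $u$ has only finitely many coordinates in $A$ together with a single tail in $B$, the case-splitting remains manageable.
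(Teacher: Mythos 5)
Your implication (ii)$\Rightarrow$(i) is sound: the tail map and the first--coordinate projection are surjective ring homomorphisms, your indicated sections show that units lift through them, and two applications of Proposition~\ref{prop0.2.5} give that $A$ and $B$ are UWNC. The directed--union observation in the other direction is also correct as far as it goes: if $u\in R_n$ is a unit of $R[A,B]$ with tail $b$, then $uu^{-1}=u^{-1}u=1$ forces every coordinate of $u^{-1}$ beyond index $n$ to equal $b^{-1}\in B$, so $u\in U(R_n)$ with $R_n\cong A^n\times B$. (For what it is worth, the paper states this proposition with no proof at all, so there is nothing to compare against.)

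The gap sits exactly where you flagged it, and it cannot be closed: the sign--alignment step fails, and with it the implication (i)$\Rightarrow$(ii) as stated. Replacing a decomposition $w=e+n$ by $-w=(-e)+(-n)$ only records that $-w$ is weakly nil-clean whenever $w$ is; it does not convert a ``$+$''--decomposition of $w$ into a ``$-$''--decomposition of $w$ itself, which is what a single global sign across all coordinates would require. The obstruction is the same one behind the paper's example that $\mathbb{Z}_{3}\times\mathbb{Z}_{3}$ is not UWNC. Concretely, take $A=B=\mathbb{Z}_{3}$, so that (i) holds. In $R[A,B]$ one has ${\rm Nil}(R[A,B])=0$ and every idempotent has all coordinates in $\{0,1\}$, so a weakly nil-clean element must equal $\pm$ an idempotent; hence the unit $u=(1,2,2,2,\ldots)$ is not weakly nil-clean, since neither $u$ nor $-u=(2,1,1,\ldots)$ is an idempotent --- the point being that $1$ admits only the sign $+$ and $2=-1$ only the sign $-$. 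Thus $R[\mathbb{Z}_{3},\mathbb{Z}_{3}]$ is not UWNC and the proposition is false as stated. The version consistent with Proposition~\ref{prop2.4} and Corollary~\ref{cor2.6} (and the one your reduction to $R_n\cong A^{n}\times B$ actually proves) is: $R[A,B]$ is UWNC if, and only if, $A$ is UNC and $B$ is UWNC.
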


\section{UWNC Group Rings}

We begin here with the following simple but useful technicality.

\begin{lemma}\label{lem0.3.1}
Let $R$ and $S$ be rings and $i:R\rightarrow S$, $\varepsilon :S\rightarrow R$ be ring homomorphisms such that $\varepsilon i=id_{R}$.
\begin{enumerate}
\item
$\varepsilon ({\rm Nil}(S))={\rm Nil}(R)$, $\varepsilon (U(S))=U(R)$ and $\varepsilon ({\rm Id}(S))={\rm Id}(R)$.
\item
If $S$ is a UWNC ring, then $R$ is a UWNC ring.
\item
If $R$ is a UWNC ring and $\text{ker}\varepsilon \subseteq {\rm Nil}(S)$, then $S$ is a UWNC ring.
\end{enumerate}
\end{lemma}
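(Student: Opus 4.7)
The plan is to exploit the retraction hypothesis $\varepsilon i=id_R$, which presents $R$ simultaneously as the quotient $S/\ker\varepsilon$ (since $\varepsilon$ is surjective) and as a unital subring of $S$ via the section $i$. This dual description lets me reduce parts (ii) and (iii) directly to previously established machinery, in particular Proposition~\ref{prop0.2.5} and Theorem~\ref{theo2.7}, so most of the real work concerns part (i).

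For part (i), I would establish each of the three equalities by two matching inclusions. The forward inclusions $\varepsilon({\rm Nil}(S))\subseteq{\rm Nil}(R)$, $\varepsilon({\rm Id}(S))\subseteq{\rm Id}(R)$ and $\varepsilon(U(S))\subseteq U(R)$ are formal consequences of $\varepsilon$ being a unital ring homomorphism: a ring map sends $n^k=0$ to $\varepsilon(n)^k=0$, sends $e^2=e$ to $\varepsilon(e)^2=\varepsilon(e)$, and sends a two-sided inverse pair $uv=vu=1$ to $\varepsilon(u)\varepsilon(v)=\varepsilon(v)\varepsilon(u)=1$. For the reverse inclusions, given any $r\in{\rm Nil}(R)$, ${\rm Id}(R)$ or $U(R)$, I would exhibit $r$ as $\varepsilon$ of a suitable element of $S$ by taking $i(r)$: the same three observations applied to $i$ show that $i(r)$ lies in the corresponding nilpotent/idempotent/unit set of $S$, and $\varepsilon(i(r))=r$ by the retraction identity.

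Part (ii) then follows almost immediately. Since $\varepsilon i=id_R$ forces $\varepsilon$ to be surjective, the first isomorphism theorem gives $R\cong S/\ker\varepsilon$, so $R$ is a factor ring of the UWNC ring $S$. By the equality $\varepsilon(U(S))=U(R)$ from (i)—more concretely, each $u\in U(R)$ lifts to $i(u)\in U(S)$—the hypothesis on unit-lifting in Proposition~\ref{prop0.2.5} is satisfied, and that proposition concludes that $R$ is UWNC. Part (iii) is handled even more cleanly: the assumption $\ker\varepsilon\subseteq{\rm Nil}(S)$, together with the fact that $\ker\varepsilon$ is already an ideal of $S$, says precisely that $\ker\varepsilon$ is a nil ideal of $S$. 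Combining the isomorphism $S/\ker\varepsilon\cong R$ with the assumption that $R$ is UWNC, Theorem~\ref{theo2.7}(ii) at once yields that $S$ is UWNC.

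I expect no genuine obstacle here; the statement is engineered so that after (i) is in hand, (ii) and (iii) become one-line applications of earlier results. The only care needed is bookkeeping in (i)—keeping straight that the forward containments use the homomorphism $\varepsilon$ while the reverse containments require the section $i$ and the identity $\varepsilon i=id_R$.
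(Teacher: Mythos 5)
Your proof is correct, and part (i) is essentially identical to the paper's: forward inclusions because $\varepsilon$ is a ring homomorphism, reverse inclusions by writing each element of ${\rm Nil}(R)$, ${\rm Id}(R)$, $U(R)$ as $\varepsilon(i(\cdot))$ and observing that $i$ preserves these three sets. Where you diverge is in (ii) and (iii): the paper argues by direct element computation --- for (ii) it lifts $u\in U(R)$ to $v\in U(S)$, decomposes $v=\pm e+q$ and pushes the decomposition through $\varepsilon$; for (iii) it computes $U(S)=\varepsilon^{-1}(U(R))=\pm{\rm Id}(S)+{\rm Nil}(S)+\ker\varepsilon$ and absorbs the nil kernel. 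You instead package (ii) as an application of Proposition~\ref{prop0.2.5} (units of the factor ring $R\cong S/\ker\varepsilon$ lift via $i$) and (iii) as an application of Theorem~\ref{theo2.7}(ii) (since $\ker\varepsilon$ is a nil ideal of $S$ with $S/\ker\varepsilon\cong R$). Both routes are sound and rest on the same underlying mechanism; your version is more modular and avoids re-deriving the absorption of $\ker\varepsilon$ into the nilpotent part, while the paper's direct computation keeps the lemma self-contained and makes explicit where the hypothesis $\ker\varepsilon\subseteq{\rm Nil}(S)$ enters (namely in the identity $\varepsilon^{-1}(U(R))=U(S)$, which is exactly the unit-lifting-modulo-a-nil-ideal fact your citation of Theorem~\ref{theo2.7} implicitly invokes).
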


\begin{proof}
\begin{enumerate}
\item
Clearly, the inclusions $\varepsilon ({\rm Nil}(S))\subseteq {\rm Nil}(R)$, $\varepsilon (U(S))\subseteq U(R)$ and $\varepsilon ({\rm Id}(S))\subseteq {\rm Id}(R)$ are valid. On the other hand, we also have that ${\rm Nil}(R)=\varepsilon i({\rm Nil}(R))\subseteq \varepsilon ({\rm Nil}(S))$, $U(R)=\varepsilon i(U(R)) \subseteq \varepsilon (U(S))$ and ${\rm Id}(R)=\varepsilon i({\rm Id}(R))\subseteq \varepsilon ({\rm Id}(S))$.

\medskip

\item
{\bf Method 1:} Let $S$ be a UWNC ring. Choose $u\in U(R)=\varepsilon (U(S))$, so $u=\varepsilon (v)$, where $v\in U(S)$. Thus, we can write $v=\pm e+q$, where $e\in {\rm Id}(S)$, $q\in {\rm Nil}(S)$. Therefore, $$u=\varepsilon (v)=\varepsilon (\pm e+q)=\pm \varepsilon (e)+\varepsilon (q),$$ where $\varepsilon (e)\in {\rm Id}(R)$ and $\varepsilon (q)\in {\rm Nil}(R)$ exploiting (i).\\

\medskip

{\bf Method 2:} Let $S$ be a UWNC ring. Hence, $U(S)=\pm {\rm Id}(S)+{\rm Nil}(S)$, whence by (i) one has that $$U(R)=\varepsilon (U(S))=\varepsilon (\pm {\rm Id}(S)+{\rm Nil}(S))=\pm \varepsilon ({\rm Id}(S))+ \varepsilon ({\rm Nil}(S))=\pm {\rm Id}(R)+{\rm Nil}(R),$$ as required.
\item
If $R$ is a UWNC ring, point (i) enables us that $$U(S)=\varepsilon^{-1}(U(R))=\varepsilon^{-1}(\pm {\rm Id}(R)+{\rm Nil}(R))=\pm {\rm Id}(S)+{\rm Nil}(S)+\text{ker}\varepsilon =\pm {\rm Id}(S)+{\rm Nil}(S),$$ as required.
\end{enumerate}
\end{proof}

\begin{remark}\label{remark0.3.1}
It is a routine technical exercise to see that Lemma \ref{lem0.3.1} (2) and (3) is true also for WUU rings.
\end{remark}

Suppose now that $G$ is an arbitrary group and $R$ is an arbitrary ring. As usual, $RG$ stands for the group ring of $G$ over $R$. The homomorphism $\varepsilon :RG\rightarrow R$, defined by $\varepsilon (\displaystyle\sum_{g\in G}a_{g}g)=\displaystyle\sum_{g\in G}a_{g}$, is called the {\it augmentation map} of $RG$ and its kernel, denoted by $\Delta (G)$, is called the {\it augmentation ideal} of $RG$.

\begin{proposition}\label{prop0.3.1}
Let $R$ be a ring and $G$ a group. If $RG$ is a UWNC ring, then $R$ is a UWNC ring. The converse holds if $\Delta (G)\subseteq {\rm Nil}(RG)$.
\end{proposition}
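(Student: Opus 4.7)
My plan is to deduce this directly from Lemma \ref{lem0.3.1} by setting up the correct pair of ring maps between $R$ and $RG$. Specifically, take $S := RG$, let $\varepsilon : RG \to R$ be the augmentation map $\sum_{g \in G} a_g g \mapsto \sum_{g \in G} a_g$, and let $i : R \to RG$ be the natural inclusion $r \mapsto r \cdot 1_G$ (the constant element supported at the identity of $G$). First I would verify that $i$ is indeed a ring homomorphism (this is immediate from the fact that $1_G \cdot 1_G = 1_G$) and that $\varepsilon \circ i = \mathrm{id}_R$, which holds because $\varepsilon(r \cdot 1_G) = r$ for every $r \in R$. Thus the hypotheses of Lemma \ref{lem0.3.1} are fulfilled.

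For the forward implication, assuming $RG$ is UWNC, Lemma \ref{lem0.3.1}(ii) immediately yields that $R$ is UWNC, with no extra assumption on $G$ required. For the converse, the key observation is that $\ker \varepsilon$ is precisely the augmentation ideal $\Delta(G)$ (this is essentially the definition of $\Delta(G)$ recalled just before the proposition). Under the standing hypothesis $\Delta(G) \subseteq {\rm Nil}(RG)$, the condition $\ker \varepsilon \subseteq {\rm Nil}(S)$ of Lemma \ref{lem0.3.1}(iii) is satisfied, so assuming $R$ is UWNC we conclude that $RG$ is UWNC, as desired.

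There is really no substantive obstacle in this argument; the entire content has been packaged into Lemma \ref{lem0.3.1}, and the proposition amounts to recognizing that the augmentation map, together with the canonical embedding of $R$ into $RG$, provides exactly the retract situation handled by that lemma. The only thing worth being careful about is making sure one records the identifications $\ker \varepsilon = \Delta(G)$ and $\varepsilon \circ i = \mathrm{id}_R$ explicitly, so that both halves of the equivalence are seen to be instances of (ii) and (iii) of Lemma \ref{lem0.3.1} respectively.
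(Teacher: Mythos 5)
Your proposal is correct and follows essentially the same route as the paper: both set up the retract pair $i : R \to RG$ and the augmentation map $\varepsilon$ with $\varepsilon \circ i = \mathrm{id}_R$, then invoke Lemma \ref{lem0.3.1}(ii) for the forward direction and (iii) (via $\ker\varepsilon = \Delta(G) \subseteq {\rm Nil}(RG)$) for the converse. If anything, you spell out the converse more explicitly than the paper, which leaves that half largely implicit.
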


\begin{proof}
Let us consider the inclusion $i: R\rightarrow RG$, given by $i(r)=\displaystyle\sum_{g\in G}a_{g}g$, where $a_{1G}=r$ and $a_{g}=0$ provided $g\neq 1_{G}$. It is easy to check that the map $i$ is a ring monomorphism and thus $R$ can also be viewed as a subring of $RG$. Furthermore, it is only enough to apply Lemma \ref{lem0.3.1} (ii) to get the claim.
\end{proof}

\begin{corollary}\label{cor0.3.1}
Let $R$ be a ring and $G$ a group. If $RG$ is a WUU ring, then $R$ is a WUU ring. The converse holds if $\Delta (G)\subseteq {\rm Nil}(RG)$.
\end{corollary}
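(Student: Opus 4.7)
The plan is to mimic the proof of Proposition~\ref{prop0.3.1} essentially verbatim, but now invoking the WUU version of Lemma~\ref{lem0.3.1} guaranteed by Remark~\ref{remark0.3.1}. The two-sided setup is already in place: the augmentation map $\varepsilon: RG\rightarrow R$ is a surjective ring homomorphism with kernel $\Delta(G)$, and the canonical inclusion $i: R\rightarrow RG$ sending $r$ to $r\cdot 1_{G}$ is a ring monomorphism satisfying $\varepsilon i = {\rm id}_{R}$. This is exactly the data needed to apply Lemma~\ref{lem0.3.1}.

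For the forward implication, assume $RG$ is WUU. By Remark~\ref{remark0.3.1}, part (ii) of Lemma~\ref{lem0.3.1} remains valid when ``UWNC'' is replaced by ``WUU''. Thus, applying this WUU-analogue of Lemma~\ref{lem0.3.1}(ii) to the pair $(i,\varepsilon)$ constructed above immediately yields that $R$ is a WUU ring. Equivalently, one may argue directly: given $u\in U(R)$, lift to $i(u)\in U(RG)$, write $i(u)=\pm 1+q$ with $q\in {\rm Nil}(RG)$, and apply $\varepsilon$ to obtain $u=\pm 1+\varepsilon(q)\in \pm 1+{\rm Nil}(R)$, using $\varepsilon({\rm Nil}(RG))\subseteq {\rm Nil}(R)$ from Lemma~\ref{lem0.3.1}(i).

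For the converse, suppose that $R$ is WUU and $\Delta(G)={\rm ker}\,\varepsilon \subseteq {\rm Nil}(RG)$. Now invoke the WUU-analogue of Lemma~\ref{lem0.3.1}(iii) (again legitimized by Remark~\ref{remark0.3.1}): the hypothesis ${\rm ker}\,\varepsilon \subseteq {\rm Nil}(RG)$ is precisely what is needed there, and the conclusion is that $RG$ is WUU. Concretely, one computes
\[
U(RG)=\varepsilon^{-1}(U(R))=\varepsilon^{-1}\bigl(\pm 1+{\rm Nil}(R)\bigr)=\pm 1+{\rm Nil}(RG)+\Delta(G)=\pm 1+{\rm Nil}(RG),
\]
where the last equality uses $\Delta(G)\subseteq {\rm Nil}(RG)$.

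I do not anticipate any real obstacle: all the work has been packaged into Lemma~\ref{lem0.3.1} and the observation in Remark~\ref{remark0.3.1}. The only point to be mildly careful about is verifying that the kernel description ${\rm ker}\,\varepsilon=\Delta(G)$ and the factorization $\varepsilon i={\rm id}_{R}$ are explicit, so that the hypothesis ``$\Delta(G)\subseteq {\rm Nil}(RG)$'' translates directly into the hypothesis of Lemma~\ref{lem0.3.1}(iii).
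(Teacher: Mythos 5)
Your proposal is correct and follows essentially the same route as the paper, which likewise deduces the corollary from Proposition~\ref{prop0.3.1} together with the WUU analogue of Lemma~\ref{lem0.3.1} recorded in Remark~\ref{remark0.3.1}; your explicit computations merely unpack what the paper cites in one line.
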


\begin{proof}
It follows at once by Proposition \ref{prop0.3.1} and Lemma \ref{lem0.3.1}.
\end{proof}

A group $G$ is called {\it locally finite} if every finitely generated subgroup of $G$ is finite. Let $p$ be a prime number. A group $G$ is called a {\it $p$-group} if the order of each element of $G$ is a power of $p$.

\medskip

We finish off our results with the following statement.

\begin{proposition}\label{prop0.3.2}
Let $R$ be a UWNC ring with $p\in {\rm Nil}(R)$ and let $G$ be a locally finite $p$-group, where $p$ is a prime. Then, $RG$ is a UWNC ring.
\end{proposition}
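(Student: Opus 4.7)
The plan is to verify the hypothesis of Proposition \ref{prop0.3.1}, namely that $\Delta(G)\subseteq {\rm Nil}(RG)$, from which the conclusion is immediate. Once this is done, nothing further is needed: $R$ being UWNC together with $\Delta(G)$ nil will yield that $RG$ is UWNC.

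First I would reduce to the finite case using local finiteness. Let $\alpha=\sum_{g\in G}a_{g}g\in\Delta(G)$, so $\alpha$ has finite support, say $g_{1},\ldots,g_{t}\in G$, and $\sum_{i}a_{g_{i}}=0$. The subgroup $H:=\langle g_{1},\ldots,g_{t}\rangle\leq G$ is finitely generated, hence finite by the assumption on $G$, and moreover is a (finite) $p$-group since $G$ is a $p$-group. Clearly $\alpha\in RH$ and indeed $\alpha\in\Delta_{R}(H)$, the augmentation ideal of the subring $RH\subseteq RG$. Therefore it suffices to prove the following key claim: if $H$ is a finite $p$-group and $p\in {\rm Nil}(R)$, then $\Delta_{R}(H)$ is a \emph{nilpotent} ideal of $RH$.

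For this claim I would argue as follows. Fix $k\geq 1$ with $p^{k}=0$ in $R$, and consider the ring epimorphism $\pi:RH\twoheadrightarrow(R/pR)H$ induced by reduction modulo $pR$; its kernel is $pRH$. In the quotient $(R/pR)H$, the coefficient ring has characteristic $p$, and by the classical Maschke/Jennings result for modular group algebras over a finite $p$-group, the augmentation ideal $\Delta_{R/pR}(H)$ is nilpotent, say of nilpotency index $m$. Pulling this back through $\pi$ gives $\Delta_{R}(H)^{m}\subseteq pRH$. Since $pRH$ is itself a nilpotent ideal of $RH$ (as $p^{k}=0$ and powers of $pRH$ land in $p^{k}RH=0$), we conclude that $\Delta_{R}(H)^{mk}=0$, so $\Delta_{R}(H)$ is nilpotent. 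In particular $\alpha\in\Delta_{R}(H)\subseteq {\rm Nil}(RH)\subseteq {\rm Nil}(RG)$.

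Having established $\Delta(G)\subseteq {\rm Nil}(RG)$, Proposition \ref{prop0.3.1} immediately delivers that $RG$ is UWNC. The main obstacle in the whole argument is the classical modular nilpotency statement $\Delta_{F}(H)^{m}=0$ for $H$ a finite $p$-group over a field (or, more generally, over a ring) of characteristic $p$; everything else (local finiteness, the quotient by $pRH$, and the final appeal to Proposition \ref{prop0.3.1}) is a straightforward bookkeeping step.
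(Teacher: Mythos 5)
Your proposal is correct and follows essentially the same route as the paper: both arguments reduce to showing that the augmentation ideal $\Delta(G)$ is nil and then transfer the UWNC property across the nil kernel of the augmentation map (the paper via $RG/\Delta(G)\cong R$ and Theorem~\ref{theo2.7}, you via the converse of Proposition~\ref{prop0.3.1}, which amounts to the same lifting principle). The only difference is that the paper simply cites Connell's result for the nilness of $\Delta(G)$, whereas you supply the standard self-contained argument (reduction to finite $p$-subgroups, nilpotency of the augmentation ideal in characteristic $p$, and pulling back along reduction modulo $pR$), which is a correct and complete filling-in of that citation.
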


\begin{proof}
Referring to \cite[Proposition 16]{11}, one verifies that $\Delta (G)$ is nil. Now, the assertion follows from the obvious isomorphism $\dfrac{RG}{\Delta (G)}\cong R$ and Theorem \ref{theo2.7}.
\end{proof}

\begin{remark}\label{remark0.3.2}
It is easily seen that Proposition \ref{prop0.3.2} holds also for WUU rings.
\end{remark}

\section{Open Questions}

We close the work with the following challenging conjectures and problems.\\

\medskip

A ring $R$ is called {\it uniquely weakly nil-clean}, provided that $R$ is a weakly nil-clean ring in which every nil-clean element is uniquely nil-clean (see \cite{2}).

\begin{conjecture}
A ring R is a WUU ring if, and only if, every unit of $R$ is uniquely weakly nil-clean.\\
\end{conjecture}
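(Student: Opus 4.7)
The plan begins by pinning down the intended meaning of the hypothesis: extrapolating from the definition of \emph{uniquely weakly nil-clean ring} preceding the conjecture, I read ``a unit $u$ is uniquely weakly nil-clean'' as saying that $u = \pm e + n$ for some $e \in \mathrm{Id}(R)$ and $n \in \mathrm{Nil}(R)$, and whenever $u$ admits a genuine nil-clean representation $u = e + n$, the pair $(e, n)$ is uniquely determined. The whole conjecture then reduces to showing that the idempotent part of the weakly nil-clean representation of a unit must equal $1$.

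For the direction $(\Rightarrow)$, assume $R$ is WUU. Then every unit has the form $\pm 1 + n$ with $n \in \mathrm{Nil}(R)$, so is weakly nil-clean. To prove uniqueness, suppose $u = e + n$ is any nil-clean representation of a unit $u$. The key step is to show $e$ is itself a unit, whence $e = 1$ and $n = u - 1$ is forced. This reduces to the known inclusion $\mathrm{Nil}(R) \subseteq J(R)$ in WUU rings (cf.\ \cite{4}): granted this, $e = u - n \in U(R) + J(R) \subseteq U(R)$, and an idempotent unit is $1$.

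For $(\Leftarrow)$, assume every unit of $R$ is uniquely weakly nil-clean. I would first mimic the argument in the proof of Theorem \ref{theo0.2.2new} to show $R$ is abelian: given an idempotent $e$, the element $u := 1 - 2e$ satisfies $u^2 = 1$, hence is a unit; applying Lemma \ref{lem0.2.2} to the (weakly) nil-clean decomposition of $u$ forces $u$ to be unipotent, and the resulting relation $2e \in \mathrm{Nil}(R)$ or $2(1-e) \in \mathrm{Nil}(R)$, combined with a case analysis on $2$, should collapse $e$ to $0$ or $1$. In the problematic situation where $2$ is neither a unit nor nilpotent, the uniqueness hypothesis is essential: a nontrivial $e$ with neither $2e$ nor $2(1-e)$ nilpotent would furnish two distinct weakly nil-clean decompositions of $1 - 2e$, violating uniqueness. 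Once $R$ is abelian, Corollary \ref{cor0.2.5} applies to identify UWNC with WUU; the hypothesis plainly implies UWNC, so $R$ is WUU.

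The principal obstacle is the converse direction, and specifically the delicate use of uniqueness in the residual $2$-non-invertible case. Pure abelianness is straightforward only when $2 \in U(R)$, exactly as in Theorem \ref{theo0.2.2new}; the real content of the conjecture lies in the case where $2 \notin U(R)$ and $2 \notin \mathrm{Nil}(R)$, where one must leverage the uniqueness to rule out the coexistence of the two ``halves'' of a weakly nil-clean decomposition coming from distinct idempotents. Making this contradiction rigorous (and then handling the uniqueness of the nilpotent part once the idempotent is pinned down) is, I expect, the point at which the conjecture either yields or needs to be refined; a clean resolution would likely combine the abelianness argument above with the reduction-to-$\mathrm{Nil}(R)\subseteq J(R)$ idea from the forward direction.
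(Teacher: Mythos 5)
First, a point of order: the paper does not prove this statement. It appears in the closing section as an open conjecture, so there is no proof of record to compare yours against; your proposal has to stand on its own, and as written it is a plan of attack rather than a proof. You say as much yourself for the converse direction, but the forward direction also has an unjustified step: everything there rests on the inclusion $\mathrm{Nil}(R)\subseteq J(R)$ for WUU rings, which you assert with only a vague pointer to \cite{4}. That inclusion is not among the facts about WUU rings quoted anywhere in the present paper, and it is exactly the kind of statement that fails for general noncommutative rings, so it needs a precise citation or a proof before the step $e=u-n\in U(R)+J(R)\subseteq U(R)$ is legitimate. There is also a definitional issue you should surface rather than bury: the paper defines ``uniquely weakly nil-clean'' only for rings, so your elementwise reading (weakly nil-clean, plus uniqueness of any genuine nil-clean representation) is itself a hypothesis; under the other natural reading (exactly one representation $u=\pm e+n$ in total) both directions change.

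The more serious problem is in the converse, where the mechanism you propose for exploiting uniqueness does not actually engage. By Lemma \ref{lem0.2.2}, for \emph{any} weakly nil-clean decomposition of $u=1-2e$ (note $u^2=1$) one of $2e$, $2(1-e)$ is automatically nilpotent: if $u=f+n$ the lemma makes $u$ unipotent and $2e=1-u$ is nilpotent, and if $u=-f+n$ the same applies to $-u$ and gives $2(1-e)$ nilpotent. So the scenario ``neither is nilpotent,'' which you hope to contradict by producing two distinct decompositions, never arises, and no second decomposition is produced; uniqueness is doing no work. The genuine obstruction is that $2e\in\mathrm{Nil}(R)$ forces $e=0$ only when $2\in U(R)$. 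For an $\mathbb{F}_2$-algebra the element $1-2e$ degenerates to $1$ and the whole abelianness argument of Theorem \ref{theo0.2.2new} yields nothing, yet (cf. Lemma \ref{lem2.13}) this is precisely the regime where the conjecture has real content. Your sketch correctly locates the difficulty, but it does not resolve it, and the statement remains open.
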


\begin{conjecture}
A ring $R$ is a strongly weakly nil-clean if, and only if, it is a semi-potent WUU ring.\\
\end{conjecture}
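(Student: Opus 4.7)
The plan is to adapt the classical theorem that a ring is strongly nil-clean if and only if it is both semi-potent and UU, accommodating the ``$\pm$'' that appears in the weak setting. The two directions are treated separately.

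For necessity, suppose $R$ is strongly weakly nil-clean. To prove $R$ is WUU, fix $u\in U(R)$ and write $\epsilon u=e+b$ with $\epsilon\in\{\pm 1\}$, $e^{2}=e$, $b\in{\rm Nil}(R)$, and $eb=be$. Since $\epsilon u=e+b$ commutes with both $e$ and $b$, so does $u^{-1}$. Left-multiplying by $1-e$ gives $(1-e)\epsilon u=(1-e)b$, whence $1-e=\epsilon(1-e)bu^{-1}$; the three factors pairwise commute and their product is nilpotent, forcing the idempotent $1-e$ to vanish, so $u\in\pm 1+{\rm Nil}(R)$. For semi-potence, fix $a\in R\setminus J(R)$ and write $\epsilon a=e+b$ as above; the identity $(\epsilon a)e=e(1+b)$ yields $e=\epsilon ae(1+b)^{-1}\in aR$. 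If $e=0$ then $a\in{\rm Nil}(R)$, so the argument closes provided ${\rm Nil}(R)\subseteq J(R)$. The latter is obtained by showing ${\rm Nil}(R)$ is a two-sided ideal: apply strong weak nil-cleanness to $nr$ for $n\in{\rm Nil}(R)$ and $r\in R$, and use the WUU step above on the ``unit part'' of $1+nr$ to force the associated idempotent component to collapse, so $nr\in{\rm Nil}(R)$.

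For sufficiency, suppose $R$ is semi-potent and WUU. Case analysis on $1\pm j$ for $j\in J(R)$ (using $U(R)=\pm 1+{\rm Nil}(R)$) yields $J(R)\subseteq{\rm Nil}(R)$, so $J(R)$ is nil and idempotents lift modulo $J(R)$. Passing to $\bar R:=R/J(R)$ produces a semi-potent reduced ring with $U(\bar R)=\{\pm 1\}$. For $\bar a\in\bar R$, either $\bar a=0$ (so $a\in J(R)\subseteq{\rm Nil}(R)$ and $a=0+a$ is strongly nil-clean) or semi-potence produces a nonzero idempotent $\bar f\in\bar a\bar R$; an induction on the complementary corner $\bar R(1-\bar f)$, combined with $U(\bar R)=\{\pm 1\}$ ruling out mixed-sign decompositions $\bar e_{+}-\bar e_{-}$ with independent orthogonal idempotents, shows $\bar a=\pm\bar e$ for some idempotent $\bar e$. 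Lifting $\bar e$ to $e^{2}=e\in R$ yields $a=\pm e+j$ with $j\in J(R)\subseteq{\rm Nil}(R)$. To secure $ej=je$, the idempotent lift is chosen inside the commutant $C_{R}(j)$, which is possible because $j$ nilpotent ensures idempotents lift in $C_{R}(j)$ modulo its own nil Jacobson radical.

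The main obstacle is the sufficiency direction, particularly the structural conclusion that every element of $\bar R$ is globally $\pm$ an idempotent. The WUU restriction $U(\bar R)=\{\pm 1\}$ together with semi-potence and reducedness must be exploited to preclude decompositions $\bar a=\bar e_{+}-\bar e_{-}$ with independent orthogonal idempotents, which requires a careful Zorn-style or direct argument over the (possibly infinite) spectrum of $\bar R$. A secondary subtlety is arranging commutativity $ej=je$ after lifting; note that $R$ need not be abelian under the hypotheses --- for instance, $T_{2}(\mathbb{Z}_{2})$ is a strongly nil-clean (hence strongly weakly nil-clean), semi-potent, WUU ring that is not abelian --- so the lifting must be performed within the commutant of $j$ rather than by appealing to centrality of idempotents.
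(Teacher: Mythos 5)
You are attempting to prove a statement that the paper itself does not prove: it appears in Section 4 among the ``challenging conjectures and problems,'' so there is no proof of the authors' to compare against, and your argument must stand entirely on its own. It does not, in either direction, although the two halves fail to different degrees.

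In the necessity half, your WUU argument is sound (the commuting decomposition $\epsilon u=e+b$ does force $1-e$ to be a nilpotent idempotent, hence $e=1$), but the semipotence argument hinges on ${\rm Nil}(R)\subseteq J(R)$, and your justification of that is circular: you invoke the ``unit part'' of $1+nr$, yet $1+nr$ is only known to be a unit once one knows $nr\in{\rm Nil}(R)$ --- which is exactly what you are trying to prove. Nilpotents do not form an ideal in a general ring, and for strongly weakly nil-clean rings the containment ${\rm Nil}(R)\subseteq J(R)$ is a consequence of the known structure theory (Chen--Sheibani \cite{6}, Breaz--Danchev--Zhou \cite{10}: $J(R)$ is nil and $R/J(R)$ is boolean, or $\cong\mathbb{Z}_3$, or a product of the two types), not of a one-line manipulation. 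This direction is likely salvageable by citing those results, together with the element-wise criterion of \cite[Theorem 2.1]{6} that $a$ is strongly weakly nil-clean exactly when $a-a^2$ or $a+a^2$ is nilpotent; that same criterion would also eliminate your commutant-lifting maneuver at the end, since $\bar a=\pm\bar e$ with $J(R)$ nil gives $a\mp a^2\in J(R)$ nilpotent directly.

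The sufficiency half is the actual content of the conjecture, and there your argument asserts rather than proves the two decisive claims. First, you declare that $\bar R=R/J(R)$ is reduced with $U(\bar R)=\{\pm 1\}$. But WUU only passes to $\bar R$ in the form $U(\bar R)\subseteq\pm 1+{\rm Nil}(\bar R)$; to shrink this to $\{\pm 1\}$ you need $\bar R$ reduced, i.e.\ ${\rm Nil}(R)\subseteq J(R)$, which is again circular --- semipotence plus WUU is not known to force nilpotents into the radical. This is precisely the point at which the theorem you are adapting (Danchev--Lam \cite{DL}: exchange UU rings are strongly nil-clean) uses the full exchange property, and weakening exchange to semipotence is the open issue already in the unsigned UU/UNC setting of \cite{5}. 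Second, your ``induction on the complementary corner'' has no induction parameter in an arbitrary ring: semipotence hands you a single nonzero idempotent in $\bar a\bar R$, with no orthogonal-family or chain machinery to exhaust $\bar a$, so the conclusion that every element of $\bar R$ equals $\pm$ an idempotent is the conjecture restated, not derived --- as your own closing paragraph half-concedes. What does hold up is your case analysis showing $J(R)\subseteq{\rm Nil}(R)$ in any WUU ring (the mixed-sign case gives $2+j$ and $2-j$ commuting nilpotents, hence $4$ and then $j$ nilpotent), so $J(R)$ is nil and idempotents lift; but between that observation and the weakly boolean structure of $\bar R$ lies the entire unresolved difficulty.
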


\begin{problem}
Is a clean, UWNC ring a weakly nil-clean ring?\\
\end{problem}

\begin{problem}
Characterize semi-perfect UWNC rings. Are they weakly nil-clean?\\
\end{problem}

\begin{problem}
Suppose that $R$ is a ring and $n\in \mathbb{N}$. Find a criterion when the full $n\times n$ matrix ring ${\rm M}_{n}(R)$ is UWNC.
\end{problem}

\medskip
\medskip

\noindent{\bf Funding:} The work of the first-named author, P.V. Danchev, is partially supported by the project Junta de Andaluc\'ia under Grant FQM 264, and by the BIDEB 2221 of T\"UB\'ITAK.


\vskip3.0pc


\begin{thebibliography}{99}

\bibitem{7}
A. Alhevaz, A. Moussavi, and M. Habibi, {\it On rings having McCoy-like conditions}, Commun. Algebra {\bf 40} (2012), 1195-1221.

\bibitem{9}
R. Barati, A. Moussavi, and A. Abyzov, {\it Rings whose elements are sums of $m$-potents and nilpotents}, Commun. Algebra {\bf 50} (10) (2022), 4437-4459.

\bibitem{10}
S. Breaz, P. Danchev, and Y. Zhou, {\it Rings in which every element is either a sum or a difference of a nilpotent and an idempotent}, J. Algebra Appl. {\bf 15} (8) (2016).

\bibitem{3}
G. Calugareanu, {\it UU rings}, Carpathian J. Math. {\bf 31} (2) (2015), 157-163.

\bibitem{6}
H. Chen and M. Sheibani, {\it Strongly weakly nil-clean rings}, J. Algebra Appl. {\bf 16} (12) (2017).

\bibitem{2}
H. Chen and M. Sheibani, {\it Theory of Clean Rings and Matrices}, Word Scientific Publishing Company, 2022.

\bibitem{8}
W. Chen, {\it On constant products of elements in skew polynomial rings}, Bull. Iranian Math. Soc. {\bf 41} (2) (2015), 453-462.

\bibitem{11}
I. G. Connell, {\it On the group ring}, Can. J. Math. {\bf 15} (1963), 650-685.

\bibitem{4}
P. V. Danchev, {\it Weakly UU rings}, Tsukuba J. Math. {\bf 40} (1) (2016), 101-118.

\bibitem{D}
P. V. Danchev, {\it Weakly JU rings}, Missouri J. Math. Sci. {\bf 29} (2) (2017), 184-196.

\bibitem{DL}
P. V. Danchev and T. Y. Lam, {\it Rings with unipotent units}, Publ. Math. (Debrecen) {\bf 88} (3-4) (2016), 449-466.

\bibitem{DM}
P. V. Danchev and W. Wm. McGovern, {\it Commutative weakly nil clean unital rings}, J. Algebra {\bf 425} (5) (2015), 410-422.

\bibitem{1}
A. J. Diesl, {\it Nil clean rings}, J. Algebra {\bf 383} (2013), 197-211.

\bibitem{5}
A. Karimi-Mansoub, T. Kosan and Y. Zhou, {\it Rings in which every unit is a sum of a nilpotent and an idempotent}, Contemp. Math. {\bf 715} (2018).

\bibitem{20}
M. T. Kosan, {\it The P. P. property of trivial extensions}, J. Algebra Appl. {\bf 14} (2015).

\bibitem{18}
P. A. Krylov, {\it Isomorphism of generalized matrix rings}, Algebra Logic {\bf 47} (4) (2008), 258-262.

\bibitem{L}
T.-Y. Lam, {\it A First Course in Noncommutative Rings}, Graduate Texts in Mathematics, {\bf 131}, Springer-Verlag, New York, 1991.

\bibitem{M}
A. Mimouni, {\it On the Jacobson unit-like rings}, Rocky Mount. J. Math. {\bf 54} (4) (2024).

\bibitem{17}
A. R. Nasr-Isfahani, {\it On skew triangular matrix rings}, Commun. Algebra. {\bf 39} (11) (2011), 4461-4469.

\bibitem{21}
G. Tang, C. Li, and Y. Zhou, {\it Study of Morita contexts}, Commun. Algebra {\bf 42} (4) (2014), 1668-1681.

\bibitem{19}
G. Tang and Y. Zhou, {\it A class of formal matrix rings}, Linear Algebra Appl. {\bf 438} (2013), 4672-4688.

\end{thebibliography}
\end{document}